\numberwithin{equation}{section}
\newtheorem{thm}{Theorem}[section]
\newtheorem{cor}[thm]{Corollary}
\newtheorem{prop}[thm]{Proposition}
\newtheorem{alg}[thm]{Algorithm}
\theoremstyle{definition}
\newtheorem{defn}[thm]{Definition}
\theoremstyle{remark}
\newtheorem{rem}[thm]{Remark}
\numberwithin{equation}{section}
\newcommand{\del}{\delta}
\newcommand{\R}{{\mathbb R}}
\newcommand{\Q}{{\mathbb Q}}
\newcommand{\N}{{\mathbb N}}
\newcommand{\Z}{{\mathbb Z}}
\newcommand{\calD}{{\mathcal D}}
\newcommand{\calF}{{\mathcal F}}
\newcommand{\calG}{{\mathcal G}}
\newcommand{\calL}{{\mathcal L}}
\newcommand{\doublewidetilde}[1]{{%
  \mathpalette\double@widetilde{#1}%
}}
\newcommand{\double@widetilde}[2]{%
  \sbox\z@{$\m@th#1\widetilde{#2}$}%
  \ht\z@=.9\ht\z@
  \widetilde{\box\z@}%
}
\title{On the translates of general dyadic systems on $\R$}%
\author{Theresa C. Anderson, Bingyang Hu, Liwei Jiang, Connor Olson and Zeyu Wei}
\address{Theresa C. Anderson: Department of Mathematics, Purdue University, 150 N. University St., W. Lafayette, IN 47907, U.S.A.}%
\email{tcanderson@purdue.edu}
\address{Bingyang Hu: Department of Mathematics, University of Wisconsin, Madison, 480 Lincoln Dr., Madision, WI 53705, U.S.A.}%
\email{bhu32@wisc.edu}
\address{Liwei Jiang: School of Operations Research and Information Engineering, Cornell University, 206 Rhodes Hall, Ithaca, NY 14850, U.S.A.}%
\email{lj282@cornell.edu}
\address{Connor Olson: Department of Mathematics, Pennsylvania State University, University Park, McAllister Building, Pollock Road, University Park, PA 16802, U.S.A.}%
\email{cjo5325@psu.edu}
\address{Zeyu Wei: Department of Statistics, University of Washington, Box 354322, 
Seattle, WA, 98195-4322, U.S.A.}%
\email{zwei5@uw.edu}
\date{\today}%
\thanks{This project was partially supported by NSF DMS 1502464, NSF DMS 1600458, an NSF RTG in Analysis and Applications, NSF grant 1500182, and an NSF Graduate Research Fellowship (for T. C. Anderson).}
\begin{document}
\maketitle


\begin{abstract}
Many techniques in harmonic analysis use the fact that a continuous object can be written as a sum (or an intersection) of dyadic counterparts, as long as those counterparts belong to an adjacent dyadic system. Here we generalize the notion of adjacent dyadic system and explore when it occurs, leading to some new and perhaps surprising classifications.  In particular, we show that every dyadic grid is determined by two parameters, the \emph{shift} and the \emph{location}; moreover two dyadic grids form an adjacent dyadic system if and only if their shifts and locations satisfy readily verifiable conditions. 
\end{abstract}

\section{Introduction}

\bigskip

Dyadic techniques play an important role in harmonic analysis. The key idea for many of these techniques is to allow one to understand some certain object (for example, operator, function space, etc) via its dyadic version, which is often much more fruitful and easier to handle (see \cite{LPW}, \cite{P}, \cite{PW} to name just a few). 

One of the recent successful applications of such an idea is in the proof of $A_2$ conjecture by using sparse domination (see, e.g., \cite{AL}), which states that for $T$ an $L^2$ bounded Calder\'on-Zygumund operator and $w \in A_2$, it holds that 
$$
\|T\|_{L^2(w)} \le C(n, T) [w]_{A_2}. 
$$
A key fact used to prove the above result is called Mei's lemma, which says there are $2^d$ dyadic grids $\calD_\alpha$ such that for any cube $Q \subset \R^d$, there exists a cube $Q_\alpha$ in one of the grids such that  $Q \subset Q_\alpha$ and $\ell_{Q_\alpha} \le 6 \ell_Q$, where all these $2^d$ dyadic grids can be regarded as a translate of the standard dyadic grid. 

The topic of Mei's lemma and the related concept of adjacent dyadic systems, which is the main subject of this paper, has been often used in the literature (and often misquoted).  One of the most detailed in correcting the history is the recent monograph of Cruz-Uribe \cite{DCU}, which also explains how these concepts fit into dyadic harmonic analysis as a whole.  See also the monograph of Lerner and Nazarov \cite{LN}.  Extensions of Mei's lemma to spaces of homogeneous type, including some background about the attributions given it by others, appear in Hyt\"onen and Kairema \cite{HK}.  Their work, builiding on the referenes therein, was important to extensions of the $A_2$ theorem mentioned above (see, e.g., \cite{AV}) by describing in great detail how to construct a non-random dyadic system on a space of homogeneous type.   This paper addresses the extension of Mei's lemma in a different sense; we ask, and give a precise answer to the question "what are all the translates of a general dyadic system on $\mathbb{R}$ such that Mei's lemma holds?"

In this paper, we give a complete answer to this question on the real line as stated specifically in the final two theorems, Theorem \ref{thm03} and Theorem \ref{mainthm}.  Informally, these theorems state that two dyadic systems are \emph{adjacent}, which imply that they satisfy a general version of Mei's lemma, if and only if the two defining parameters of \emph{shift} and \emph{location} satisfy certain precise conditions that can be quickly checked in practice.

We begin with a few definitions that will be used throughout this paper, in particular to define the aforementioned condition on the shift parameter of a grid.

\begin{defn} \label{defn00}
Given $n \in \N, n \ge 2$, a collection $\calG$ of left-closed and right-open intervals on $\R$ is called \emph{a general dyadic grid with base $n$ (or $n$-grid)} if the following conditions are satisfied:
\begin{enumerate}
\item [(i).] For any $Q \in \calG$, its sidelength $\ell_Q$ is of the form $n^k, k \in \Z$;
\item [(ii).] $Q \cap R \in \{Q, R, \emptyset\}$ for any $Q, R \in \calG$;
\item [(iii).] For each fixed $k\in \Z$, the intervals of a fixed sidelength $n^k$ form a partition of $\R$.
\end{enumerate}
Moreover, we write $\calG_s$ as the \emph{standard dyadic grid with base $n$}. Namely, 
$$
\calG_s:=\left\{ \left[ \frac{k}{n^m}, \frac{k+1}{n^m} \right) \bigg | k, m \in \Z \right\}. 
$$
\end{defn}

Note that when $n=2$, we get the classic dyadic system on $\R$.

\begin{defn}
\label{Sigma def}
We write $\Sigma$ to be the collection of pairs of integers $(m, k)$ with either $m \ge 0, k \in \Z$ or $m<0, k \neq 0$. 
\end{defn}

This leads us to define our shift parameter $\delta$.
\begin{defn} \label{defn01}
A real number $\del$ is \emph{$n$-far} if the distance from $\del$ to each given rational $k/n^m$ is at least some fixed multiple of $1/n^m$, where $(m, k) \in \Sigma$. That is, if there exists $C>0$ such that
\begin{equation}
\label{C delta}
   \left| \del-\frac{k}{n^m} \right| \ge \frac{C}{n^m}, \quad \forall (m, k) \in \Sigma.
\end{equation}
where $C$ may depend on $\del$ but independent of $m$ and $k$.  We will refer to the best constant in \eqref{C delta} as $C(\delta)$.
Finally, we denote $\calF_n$ to be the collection of $n$-far numbers. 
\end{defn}

\begin{rem}\label{rem1}
We have the following observations:

\begin{enumerate}  
\item [1.] The definition of the $2$-far number in \cite[Definition 1.2]{LPW} is not quite accurate: one needs to exclude the case when $k=0$ when $m$ is very negative, otherwise the set of far numbers is empty.  To illustrate this, let $n = 2$, $k = 0$, $m<0$ and fix $\del$.  The equation \eqref{C delta} gives that $|\del| \geq \frac{C}{2^m}$, and as $m \to -\infty$, this forces $C \to 0$.  This is also the reason for us to consider the set $\Sigma$ in our definition of far numbers, rather than all pairs of integers;
\item [2.] 
An easy computation shows that  we can replace the condition \eqref{C delta} in Definition \ref{defn01} by the equivalent
$$
d(\del):= \inf \left\{n^m \left|\del-\frac{k}{n^m} \right| : m \ge 0, k \in \Z \right\}>0,
$$
which coincides with the original definition of $2$-far numbers on the circle in \cite{TM}; we will use this constant in computations.  Therefore $\del$ is $n$-far if an only if there exists $C(\del) >0$ which occurs if and only if there exists $d(\del)>0$.  However, for the purpose of dealing with the translates of $\calG_s$ later (which involve certain shifts of the $m$-th generation of $\calG$, where $m<0$),  we consider $C(\delta)$ as a better choice, as it contains some information for negative $m$;
\item [3.] The following assertions are equivalent, by a straightforward calculation:
\begin{enumerate}
\item [a.] $\del$ is $n$-far;
\item [b.] $n^k\del$ is $n$-far, where $k \in \N, k \ge 1$;
\item [c.] $\del$ is $n^q$-far, where $q \in \N, q \ge 1$;
\end{enumerate}
\item [4.] It is also easy to see that if $\del$ is $n$-far, then $\del+1$ is $n$-far, since $d(\del+1)=d(\del)$.  Thus, we may restrict our interest to those $\del \in [0, 1)$ when dealing with $d(\delta)$.  We emphasize that this does not hold for the constant $C(\del)$.  For example, consider the $2$-far number $\del = 1/3$.  We have $d(1/3) = d(10/3) = 1/3$, but $C(1/3) = 1/3$ while $C(10/3)$ is determined by $|10/3 - 2| = 4/3$, so $C(10/3) = 8/3$.  Therefore $C(\del)$ is not translation invariant, whereas $d(\del)$ is.
\item[5.] By the statements in Section 2, we see that the set of far numbers is nonempty and furthermore that if $\del \in [0,1)$ is $n$-far then $C(\del) \leq \frac{1}{n}$.  For any $\del \in \R$ that is $n$-far we will also see that $d(\del) \leq \frac{1}{n}$.
\end{enumerate}
\end{rem}

There are many applications of our results.  For example, our results can be used to expand many of the theorems in \cite{LPW}.  The weight classes $A_p$ and reverse H\"older classes, as well as function classes such as bounded mean oscillation and maximal functions are considered in \cite{LPW}.  We leave the details to the interested reader to pursue.

The structure of the paper is as follows.  Section 2 begins with several examples of far numbers, leading to a complete classification of far numbers, and concludes with some properties.  Section 3 answers the question posed in the introduction by showing precisely which translates satisfy Mei's lemma, hilighted by Theorems \ref{thm03} and \ref{mainthm}.

\bigskip

\section{Far numbers}

\bigskip
We begin this section with a few illustrative examples of far numbers from the first author's Ph.D. thesis \cite{A}.  The proofs use elementary number theory techniques.

\begin{prop}
Let $n$ be prime, and $p$ be a prime such that $n\neq p$.  We have that $1/p$ is n-far.
\end{prop}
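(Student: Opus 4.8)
The plan is to use the equivalent characterization from Remark \ref{rem1}(2): it suffices to show that $d(1/p) = \inf\{n^m |1/p - k/n^m| : m \ge 0, k \in \Z\} > 0$. Rewriting, $n^m|1/p - k/n^m| = |n^m/p - k| = |n^m - kp|/p$, so I need a uniform lower bound on $|n^m - kp|$ over all $m \ge 0$ and $k \in \Z$; then $d(1/p)$ will be that bound divided by $p$.

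First I would observe that since $n$ and $p$ are distinct primes, $p \nmid n^m$ for every $m \ge 0$ (this is where primality of $n$, or really just $\gcd(n,p)=1$, is used, together with primality of $p$ to know $p\mid n^m$ would force $p\mid n$). Hence $n^m \not\equiv 0 \pmod p$, which means $n^m - kp$ is a nonzero integer for every $k$, so $|n^m - kp| \ge 1$. This immediately gives $d(1/p) \ge 1/p > 0$, so $1/p$ is $n$-far.

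The argument is short enough that the only thing resembling an obstacle is making sure the reduction to the $d(\cdot)$ formulation and the index set $m \ge 0$ is invoked cleanly — one must note that the case $m < 0, k \ne 0$ in $\Sigma$ is automatically handled by Remark \ref{rem1}(2), so we genuinely only need $m \ge 0$. I would also remark that in fact $d(1/p) = 1/p$, since taking $m = 0$ and $k = 0$ gives exactly $n^0|1/p - 0| = 1/p$, so the infimum is attained and equals $1/p$; this makes the constant explicit. The primality of $n$ is not essential here — the same proof works whenever $\gcd(n,p) = 1$ — but I would state it as in the proposition and perhaps note the mild generalization in passing.
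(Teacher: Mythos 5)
Your proof is correct and follows essentially the same route as the paper's: both arguments reduce to the observation that $n^m - kp$ is a nonzero integer because $\gcd(n^m,p)=1$, hence $|n^m-kp|\ge 1$ and the constant $1/p$ works. The only cosmetic difference is that you phrase it via the $d(\cdot)$ formulation of Remark \ref{rem1}(2) (which cleanly disposes of the $m<0$ cases), while the paper works directly with the defining inequality.
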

\begin{proof}
We will show that 
\[\left|\frac{n^m - kp}{n^m}\right|\geq Cp/n^m\]
which means that \[n^m \geq p(C+k) \text{ or } n^m \leq p(k - C).\]
Now let $C = 1/p$.  Then for all $n, k$, we need $n^m \geq 1+kp$ or $n^m\leq pk-1$.  Since $n^m\in\Z$, the only time our requirement would not be satisfied is when $n^m = kp$.   So for $1/p$ to be n-far, we need \[n^m\not\equiv 0 \pmod{p}\] for all $n$.  However, we have $gcd(p,n) = 1$, so $gcd(n^m,p) = 1$, which implies that $n^m\not\equiv 0 \pmod{p}$ for all $n$.

\end{proof}
\begin{rem}
The choice of $C$ above is actually the best possible, which can be shown using Fermat's Little Theorem (FLT).  Indeed, we show that if $C>1/p$, then the definition of $n$-far fails for $p$ and $n$ (where $\gcd(p,n) = 1$).  So if $C = 1/p+\varepsilon$ for any $\varepsilon >0$, then we would need to show that \[n^m\geq pk+1+p\varepsilon \text{ or } n^m\leq pk - 1-p\varepsilon.\]  Thus we would need to prevent $n^m = pk+1$ and $n^m = pk-1$ for all $n$ and $k$.  But by FLT, \[n^{p-1}\equiv 1\mod{p},\] so \[n^{p-1} = pk+1,\] which is a contradiction.  Thus $C = 1/p$ is the best possible.
\end{rem}

\begin{rem}
In a similar manner, we can also prove that $1/b$ is n-far where $n$ does not divide $b$.  Details are left to the reader.
\end{rem}

\begin{prop}
We have that $\frac{h}{n^j}+\frac{1}{p}\frac{l}{n^j}$ is n-far for $\gcd(p,nl) = 1$.
\end{prop}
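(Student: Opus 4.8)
The plan is to reduce the claim to the single base case that $\frac{l}{p}$ is $n$-far whenever $\gcd(p,nl)=1$ --- where, as in the preceding proposition, $p$ is an integer $\ge 2$ (the hypothesis $p\ge 2$ is needed, since no integer is $n$-far) --- and then to verify that base case by the elementary divisibility argument already used above.

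First I would normalize $\del := \frac{h}{n^j}+\frac{1}{p}\frac{l}{n^j}=\frac{hp+l}{pn^j}$. Set $j' := \max(j,0)\ge 0$. Then $n^{j'}\del$ is $n$-far if and only if $\del$ is: trivially when $j'=0$, and by Remark \ref{rem1}(3)(b) when $j'=j\ge 1$. Moreover
\[
n^{j'}\del \;=\; h\,n^{j'-j} \;+\; \frac{l\,n^{j'-j}}{p},
\]
where $h\,n^{j'-j}\in\Z$ since $j'-j\ge 0$, so by Remark \ref{rem1}(4) (translation invariance of $d$) the number $n^{j'}\del$ is $n$-far if and only if $\frac{l'}{p}$ is $n$-far, where $l' := l\,n^{j'-j}$. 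Finally $\gcd(p,nl')=1$ still holds because $\gcd(p,n)=\gcd(p,l)=1$; thus everything is reduced to the base case (with $l$ replaced by $l'$).

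For the base case I would use the formula in Remark \ref{rem1}(2): for every $m\ge 0$ and $k\in\Z$,
\[
n^m\left|\frac{l}{p}-\frac{k}{n^m}\right| \;=\; \frac{\left|ln^m-kp\right|}{p}.
\]
Since $\gcd(p,n)=1$ we have $\gcd(p,n^m)=1$, and together with $\gcd(p,l)=1$ this gives $\gcd(p,ln^m)=1$; as $p\ge 2$ it follows that $p\nmid ln^m$, hence $ln^m-kp\neq 0$ and therefore $|ln^m-kp|\ge 1$. Consequently $d(l/p)\ge\frac{1}{p}>0$, which by Remark \ref{rem1}(2) is exactly the statement that $l/p$ is $n$-far. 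Combining this with the reduction above completes the proof.

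I do not expect a real obstacle here. The only points requiring a little care are: passing to the $d(\cdot)$ characterization of Remark \ref{rem1}(2), so that only nonnegative exponents $m$ need to be examined; checking that the arithmetic hypothesis $\gcd(p,nl)=1$ survives the normalization (multiplying by a power of $n$ and subtracting an integer); and keeping the standing assumption $p\ge 2$ in view throughout.
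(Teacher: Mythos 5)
Your proof is correct, and its arithmetic core is the same as the paper's: the constant $C=1/p$ works because $p$ is coprime to the relevant numerator, so the quantity $n^{m}\lvert\del-k/n^{m}\rvert$ is a nonzero multiple of $1/p$. Where you differ is in the packaging, and your version is arguably cleaner on two counts. First, the paper computes directly with the full expression $\bigl|\tfrac{hn^{m-j}p+ln^{m-j}-kp}{n^{m}}\bigr|$ and tacitly treats $n^{m-j}$ as an integer, i.e.\ it implicitly assumes $m\ge j$; your normalization via Remark \ref{rem1}(3)(b) (scaling by $n^{j'}$) and Remark \ref{rem1}(4) (integer translation), combined with the $d(\cdot)$ characterization of Remark \ref{rem1}(2), reduces everything to exponents $m\ge 0$ and so disposes of that case distinction honestly. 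Second, the paper closes the argument by invoking Fermat's Little Theorem ($(n^{m-j}(l+hp))^{p-1}\equiv 1 \pmod p$), which both requires $p$ prime and is more than is needed; you use only that $\gcd(p,ln^{m})=1$ with $p\ge 2$ forces $p\nmid ln^{m}$, which is more elementary and shows the statement holds for any integer $p\ge 2$ coprime to $nl$, not just primes. Your explicit flagging of the standing hypothesis $p\ge 2$ (left implicit in the paper) is also a worthwhile clarification, since the claim is false for $p=1$.
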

\begin{proof}
We must show that \[\left|\frac{h}{n^j}+\frac{1}{p}\frac{l}{n^j}-\frac{k}{n^m}\right|\geq C/n^m\] that is, \[ \left|\frac{hn^{m-j}p+ln^{m-j}-kp}{n^m}\right|\geq Cp/n^m.\]
We must show that there are no integers in the range $(\frac{n^{m-j}(l+hp)}{p}-C, \frac{n^{m-j}(l+hp)}{p}+C)$.  Letting $C = 1/p$, we can easily check that there are no integers in the range $(\frac{n^{m-j}(l+hp)-1}{p}, \frac{n^{m-j}(l+hp)+1}{p})$ since $\gcd(p,n^{m-j}(l+hp)) = 1$ so $(n^{m-j}(l+hp))^{p-1} \equiv 1 \pmod{p}$.
\end{proof}

From now on, we fix some $n \in \N, n \ge 2$. In this section, we characterize all the $n$-far numbers and study the set $\calF_n$. For any $\del \in [0, 1]$, consider its base-$n$ representation, namely,
$$
\del=\sum_{i=1}^\infty \frac{a_i}{n^i}, 
$$
where $ a_i \in \{0, 1, \dots, n-1\}, i \ge 1$ and the choice of $\{a_i\}_{i \in \N}$ is the finest, in the sense that if $\del=\sum\limits_{i=1}^\infty \frac{b_i}{n^i}$ is another base-$n$ representation, then there exists some $i_0 \in \N$, such that $a_i=b_i$ when $i<i_0$ and $a_i>b_i$ when $i=i_0$.  Hence, we can write $\del=(a_1, \dots, a_i, \dots)_n$.

\begin{defn}
Let $\del=(a_1, \dots, a_i, \dots)_n$. If for some $i_2 \ge i_1 \ge 1$, we have
$$
a_{i_1}=a_{i_1+1}= \dots= a_{i_2}=0 \ \textrm{or} \ n-1,
$$
then we say $(a_{i_1}, \dots, a_{i_2})$ is a \emph{tie}. Moreover, we denote
$$
T(\del):=\sup \left\{ i_2-i_1+1 \ \bigg |  \ (a_{i_1}, \dots, a_{i_2}) \ \textrm{is a tie} \right\}. 
$$
That is, $T(\del)$ is the supremum of the \emph{lengths} of all ties. 
\end{defn}
\begin{rem}
We can define ties for all $\del \in \R$ by using the translation invariant property $T(\del) = T(1+\del)$, when can be seen by the definition of the base $n$ expansion.
\end{rem}
\begin{rem}
We give a few examples of ties and provide a brief discussion.  Fix $n = 2$ and note that $T(1/3) = 1$ since $1/3 = (1,0,1,0, \dots )$, while $T(4/7) = 2$ since $4/7 = (1,0,0,1,0,0, \dots)$.  We can show that $4/7$ is far by a variant of the number theoretic arguments given earlier in this section: by choosing $C(4/7) = 1/7$, we have to show that $|\frac{4\cdot 2^m - 7k}{2^m}| \geq \frac{1}{2^m}$, which reduces to showing that $2^b \neq 0 \mod 7$, for $b \in \N$, which always holds.  We note that $T(1/2) = \infty$, and since $1/2 = (1,0,0,0,0, \dots) = (1,0,1,1,1,1, \dots)$, these two representations share the same tie length, which is related to the familiar fact of non-unique representations of decimals.
\end{rem}

We are now ready to present the main result in this section, which completely classifies $n$-far numbers. 
\begin{thm} \label{thm01}
Let $\delta \in [0,1)$.  Then $\del$ is $n$-far if and only if $T(\del)<\infty$. Moreover, if $\del$ is $n$-far, then
\begin{equation} \label{exeq01}
\frac{1}{n^{T(\del)+1}} \le C(\del) \le \frac{1}{n^{T(\del)}}.
\end{equation}
\end{thm}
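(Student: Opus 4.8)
The plan is to reduce the entire statement to a single two‑sided estimate on the quantity $d(\del):=\inf_{m\ge 0}\dist(n^m\del,\Z)$, namely
\[
\frac{1}{n^{T(\del)+1}}\;\le\; d(\del)\;\le\;\frac{1}{n^{T(\del)}},
\]
read with the convention $n^{-\infty}=0$, together with the identity $C(\del)=d(\del)$ valid for $\del\in[0,1)$. This suffices: the estimate shows $d(\del)>0$ if and only if $T(\del)<\infty$, hence (via $C(\del)=d(\del)$) that $\del$ is $n$-far if and only if $T(\del)<\infty$, and when $\del$ is $n$-far it becomes exactly \eqref{exeq01}.

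First I would prove $C(\del)=d(\del)$. By Definition~\ref{defn01}, $C(\del)=\inf_{(m,k)\in\Sigma} n^m|\del-k/n^m|$, and for each fixed $m\ge 0$ the infimum over $k\in\Z$ is $\dist(n^m\del,\Z)$; so it only remains to see that the pairs $(m,k)\in\Sigma$ with $m<0$ never lower the value. For such a pair $k\ne 0$ and $|k/n^m|\ge n$, so $n^m|\del-k/n^m|>1-n^{-|m|}\ge 1-1/n$, whereas the pairs $(0,0)$ and $(0,1)$ contribute $\del$ and $1-\del$, whose minimum is at most $1/2\le 1-1/n$. Hence $C(\del)=d(\del)$. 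Then I would fix the finest base-$n$ expansion $\del=(a_1,a_2,\dots)_n$ and note that $n^m\del$ equals an integer plus $\sum_{i\ge 1}a_{m+i}n^{-i}$, so its fractional part has base-$n$ digits $a_{m+1},a_{m+2},\dots$ and $\dist(n^m\del,\Z)$ is governed by how long a run of $0$'s or of $(n-1)$'s begins at position $m+1$.

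The heart of the proof is then the following digit estimate, valid for each fixed $m\ge 0$. If $a_{m+1}\notin\{0,n-1\}$, then $\dist(n^m\del,\Z)\ge 1/n$. If instead $a_{m+1}=0$ (resp.\ $a_{m+1}=n-1$) and $L$ is the length of the maximal run of $0$'s (resp.\ of $(n-1)$'s) starting at position $m+1$, then $n^{-L-1}\le\dist(n^m\del,\Z)\le n^{-L}$; here, in the $0$-case, $L$ may be infinite, in which case $\del$ is a dyadic rational and $\dist(n^m\del,\Z)=0$. Each of these is obtained by summing a geometric series, but only after identifying the boundary digit terminating the run: in the $0$-case $a_{m+L+1}\ne 0$, and in the $(n-1)$-case $a_{m+L+1}\le n-2$, where the latter uses in an essential way that the \emph{finest} base-$n$ expansion contains no infinite trailing string of $(n-1)$'s. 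I expect this boundary-digit bookkeeping to be the only genuinely delicate point; the geometric-series estimates themselves are routine.

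Finally I would assemble the two inequalities. Every run in the digit estimate is a tie, so $L\le T(\del)$, and since $1/n\ge n^{-T(\del)-1}$ this gives $\dist(n^m\del,\Z)\ge n^{-T(\del)-1}$ for all $m\ge 0$, i.e.\ $d(\del)\ge n^{-T(\del)-1}$; in particular $T(\del)<\infty$ implies $\del$ is $n$-far and yields the left-hand inequality of \eqref{exeq01}. For the reverse, a tie $(a_{i_1},\dots,a_{i_2})$ has left endpoint $i_1\ge 1$, and applying the digit estimate at $m=i_1-1\ge 0$ gives $\dist(n^m\del,\Z)\le n^{-\ell}$ with $\ell:=i_2-i_1+1$; letting the tie length tend to infinity when $T(\del)=\infty$ forces $d(\del)=0$ (so $\del$ is not $n$-far), while choosing a tie of maximal length when $T(\del)<\infty$ gives $d(\del)\le n^{-T(\del)}$, the right-hand inequality of \eqref{exeq01}. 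The degenerate case in which $\del$ has no tie at all ($T(\del)=0$, which can occur only for $n\ge 3$) is immediate from the first clause of the digit estimate.
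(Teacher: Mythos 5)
Your proposal is correct and rests on the same core mechanism as the paper's proof: relating $n^m\bigl|\del-k/n^m\bigr|$ for $m\ge 0$ to the length of the run of $0$'s or $(n-1)$'s beginning at digit $m+1$ of the finest base-$n$ expansion, with the finiteness of runs of $(n-1)$'s supplying the boundary digit. Your packaging — first proving $C(\del)=d(\del)$ on $[0,1)$ to dispose of the $m<0$ pairs, then a single two-sided digit lemma — is a cleaner organization of the same argument rather than a different route.
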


\begin{rem}
If one instead uses $d(\del)$ instead of $C(\del)$, one gets that, using the translation invariance described in Remark \ref{rem1}, a complete characterization of all $n$-far numbers on $\R$.
\end{rem}

\begin{proof}
\textit{Necessity.} Expecting a contradiction, assume $T(\del)=\infty$, where $\del$ is $n$-far, that is, there exists some $C(\del)>0$, such that
\begin{equation} \label{eq03}
\left| \del-\frac{k}{n^m} \right| \ge \frac{C(\del)}{n^m}, \quad \forall (m, k) \in \Sigma.
\end{equation}
Take and fix some $N$ sufficiently large such that $n^N C(\del)>1$. Since $T(\del)=\infty$, there exists some $\ell \ge 1$ and $M\geq N$, such that
\begin{equation} \label{eq02}
a_{\ell+1}=\dots=a_{\ell+M}=0 \ \textrm{or} \ n-1,
\end{equation}
with $a_\ell \neq a_{\ell+1}$ and $a_{\ell+M} \neq a_{\ell+M+1}$. We consider two different cases. 

\medskip

\textit{Case I: $a_{\ell+1}=\dots=a_{\ell+M}=0$.}

\medskip

From \eqref{eq02}, we have $a_\ell, a_{\ell+M+1} \neq 0$. Consider the base-$n$ representation
$$
(a_1, \dots, a_\ell, 0, \dots, 0, \dots)_n, 
$$
which clearly can be written as $\frac{l_1}{n^\ell}$ for some $l_1 \in \N, l_1<n^\ell$. Then we have 
$$
0 < \del-\frac{l_1}{n^\ell} \le \frac{1}{n^{\ell+M}} \le \frac{1}{n^{\ell+N}}<\frac{C(\del)}{n^\ell},
$$
which is a contradiction. 

\medskip

\textit{Case II: $a_{\ell+1}=\dots=a_{\ell+M}=n-1$.}

\medskip

Again from our early assumption, $a_\ell, a_{\ell+M+1} \neq n-1$, that is, $a_\ell, a_{\ell+M+1} \le n-2$. Consider the base-$n$ representation
$$
(a_1, \dots, a_\ell+1, 0, \dots, 0, \dots)_n,
$$
which is equal to $\frac{l_2}{n^\ell}$ for some $l_2 \in \N, l_2<n^\ell$. Then, 
$$
0 < \frac{l_2}{n^\ell}-\del \le \frac{1}{n^{\ell+M}} \le \frac{1}{n^{\ell+N}} < \frac{C(\del)}{n^\ell},
$$
which, again, contradicts \eqref{eq03}. 

\medskip

\textit{Sufficiency.} Let $T(\del)=M<\infty$. Without the loss of generality, we may assume $0<\del<1$, since $T(0)=\infty$. We have to show that $\del$ is $n$-far, namely, there exists some $C>0$, such that
$$
\left| \del-\frac{k}{n^m} \right| \ge \frac{C}{n^m}, \quad \forall (m, k) \in \Sigma. 
$$
Consider first the case $m<0$ and $k \neq 0$; we have that
$$
\inf_{k \neq 0} \lim_{m \to -\infty} \left( n^m \left|\del-\frac{k}{n^m} \right| \right)=1,
$$
where the infinum is obtained for $k=1$.  This implies that $\del$ satisfies \eqref{C delta} when restricted to these $m$ and $k$, with $C(\delta) = 1$. 
Therefore, it suffices for us to consider the case when $m \ge 0$ and $k \in \N$.
 Furthermore, we can assume that $0 \le \frac{k}{n^m} \le 1$, and hence we have the base-$n$ representation
$$
\frac{k}{n^m}=(b_1, \dots, b_m, 0, \dots, 0, \dots)_n. 
$$
Again, we consider two cases. 

\medskip

\textit{Case I: $\del>\frac{k}{n^m}$.}

\medskip

Since $T(\del)=M$, the length of the ties consisting of $0$ after $a_m$ will not surpass $M$.  If $a_{m+1} \neq b_{m+1}$, then $|\del - \frac{k}{n^m}| > \frac{1}{n^{m+2}}$, and if $a_{m+1} = b_{m+1} = 0$, in the worst case, $a_{m+1} = \dots = a_{m+M} = 0$, which implies
$$
\left| \del-\frac{k}{n^m}\right|>\frac{1}{n^{m+M+1}}=\frac{1}{n^{M+1}} \cdot \frac{1}{n^m}.
$$

\medskip

\textit{Case II: $\del<\frac{k}{n^m}$.}

\medskip

Again, since $T(\del)=M$, the length of the ties consisting of $n-1$ after $a_m$ will not surpass $M$, which implies
$$
\left| \del-\frac{k}{n^m}\right|=\frac{k}{n^m}-\del>\frac{1}{n^{m+M+1}}=\frac{1}{n^{M+1}} \cdot \frac{1}{n^m}.
$$
The proof is complete if we put $C=\frac{1}{n^{M+1}}$. 

\medskip

Finally, for the estimate \eqref{exeq01}, it is easy to see that the first inequality follows from the sufficient part, while the second one follows from the proof of the necessary part.

\end{proof}

An interesting consequence of Theorem \ref{thm01} is that there exists some irrational far numbers. More precisely, we have the following corollary. 

\begin{cor}
The following assertions hold.
\begin{enumerate}
\item [(a).] All rationals except those of the form $\frac{k}{n^m}, (m, k) \in \Sigma$ are $n$-far numbers;
\item [(b).] Not all irrationals are $n$-far numbers, meanwhile, the set of irrational $n$-far numbers is not empty.
\end{enumerate}
\end{cor}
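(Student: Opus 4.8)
The plan is to read off both statements directly from the characterisation in Theorem \ref{thm01}, after first reducing every number to its fractional part: being $n$-far is invariant under adding integers (Remark \ref{rem1}, item 4, since $d(\del+1)=d(\del)$), and so is the tie length $T$, so we may always assume $\del\in[0,1)$ and apply Theorem \ref{thm01} there.

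\emph{Part (a).} Let $\del$ be rational; subtracting an integer, assume $\del\in[0,1)$. I would split according to whether $\del$ is an $n$-adic rational, i.e. of the form $k/n^m$ with $m\ge 0$ — note that this is exactly the set obtained from $\{k/n^m:(m,k)\in\Sigma\}$ after reducing mod $1$, since for $m<0$, $k\neq 0$ one only gets integers, which are already $n$-adic. If $\del=k/n^m$ with $m\ge 0$, then its finest base-$n$ expansion terminates, hence its digits are eventually $0$, so $T(\del)=\infty$ and Theorem \ref{thm01} shows $\del$ is not $n$-far — this is precisely the excluded family. If instead $\del$ is rational but not $n$-adic, then it has a unique base-$n$ expansion, which is eventually periodic, and its repeating block is neither constantly $0$ nor constantly $n-1$ (otherwise the expansion would terminate and $\del$ would be $n$-adic). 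Any eventually periodic digit sequence with non-constant period has runs of a fixed digit of length bounded by roughly the sum of its pre-period and period lengths, so $T(\del)<\infty$, and Theorem \ref{thm01} gives that $\del$ is $n$-far.

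\emph{Part (b).} Both halves amount to exhibiting a number in $[0,1)$ with prescribed tie behaviour. For an irrational that is \emph{not} $n$-far, take $\del=\sum_{j\ge 1}n^{-2^j}$, whose base-$n$ digits are $1$ at positions $2,4,8,16,\dots$ and $0$ elsewhere. The blocks of consecutive $0$'s have lengths $2^{j}-1\to\infty$, so $T(\del)=\infty$ and $\del$ is not $n$-far by Theorem \ref{thm01}; and the digit sequence is not eventually periodic, so $\del$ is irrational. For an irrational that \emph{is} $n$-far, take a digit sequence $(a_i)\in\{0,1\}^{\N}$ that is not eventually periodic and has no three equal consecutive entries — for instance the Thue–Morse sequence, which is overlap-free and hence contains neither $000$ nor $111$ — and set $\del=(a_1,a_2,\dots)_n$. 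Then $T(\del)\le 2<\infty$, so $\del$ is $n$-far by Theorem \ref{thm01}, while the non-eventual-periodicity of $(a_i)$ forces $\del$ to be irrational.

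The argument has no genuine obstacle; the only care needed is bookkeeping. One must work with the \emph{finest} base-$n$ representation when singling out the $n$-adic rationals (so that the alternative ``tail of $n-1$'s'' representation does not cause confusion), and one should record the two standard facts used implicitly: a real number is rational if and only if its base-$n$ expansion is eventually periodic (this gives irrationality of the two explicit examples), and eventually periodic sequences with non-constant period have bounded runs (this gives $T(\del)<\infty$ for non-$n$-adic rationals). Both are elementary.
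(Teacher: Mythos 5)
Your proposal is correct and follows essentially the same route as the paper: both parts are read off from the tie characterization in Theorem \ref{thm01}, with part (a) reduced to the eventual periodicity of base-$n$ expansions of rationals and part (b) settled by exhibiting one irrational with unbounded ties and one aperiodic digit sequence with bounded ties. Your write-up is simply more explicit (the reduction mod $1$, the identification of the excluded set with the $n$-adic rationals, and the Thue--Morse example in place of the paper's ad hoc aperiodic pattern), but the underlying argument is the same.
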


\begin{proof}
The assertion (a) is obvious, since the base-$n$ representation behaves periodically after a certain digit. For assertion (b), we give some examples. For an irrational which is not $n$-far, consider the base-$n$ representation
$$
(1, 0, 1, 0, 0, 1, \dots, 1, \underbrace{0, \dots, 0, }_\text{k copies of $0$} 1, \dots)_n,
$$
while for an irrational $n$-far number, consider 
$$
(1, 0, 1, 0, 0, \dots, \underbrace{1, 0, 1, 0, \dots, 1, 0, 0, }_\text{k copies of $1, 0$ and 1 copy of $1, 0, 0$} \dots)_n.
$$
Clearly, these examples work due to Theorem \ref{thm01}.
\end{proof}

We conclude this section by exploring some properties of the set $\calF_n$, which is a generalization of the dyadic case.

\begin{prop}
The set $\calF_n$ is dense and meager in $\R$ with Lebesgue measure zero. 
\end{prop}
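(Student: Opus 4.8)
The plan is to prove the three assertions --- density, meagerness, and Lebesgue nullity --- separately, in each case reducing from $\R$ to $[0,1)$ by the integer translation invariance of $\calF_n$ recorded in Remark~\ref{rem1}(4) (namely, $\del$ is $n$-far iff $\del+1$ is) and then invoking the digit characterization of Theorem~\ref{thm01}: for $\del\in[0,1)$, $\del$ is $n$-far iff $T(\del)<\infty$. I will use throughout the standard facts that a number in $(0,1)$ has a non-unique base-$n$ expansion exactly when it equals some $k/n^{j}$, and that in that case its \emph{finest} expansion is the one ending in $0$'s, so that $T=\infty$ and the number lies outside $\calF_n$; every other point of $(0,1)$ has a unique base-$n$ expansion.

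\emph{Density.} By translation invariance it suffices to find an $n$-far number in each nonempty open interval $I\subseteq[0,1)$. Fix $r$ so large that some level-$r$ cylinder $J=[\,0.c_1\cdots c_r,\ 0.c_1\cdots c_r+n^{-r})$ lies in $I$, and put
\[
\del=(c_1,\dots,c_r,1,0,1,0,1,0,\dots)_n\in J .
\]
Since the tail $1,0,1,0,\dots$ is not eventually constant, this is the finest expansion of $\del$, and every maximal run of $0$'s or of $(n-1)$'s in it either lies within the first $r$ digits or meets among them only position $r$; hence $T(\del)\le r+1<\infty$, so $\del$ is $n$-far by Theorem~\ref{thm01}. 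Thus $\calF_n$ is dense in $\R$.

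\emph{Meagerness and measure zero.} By Theorem~\ref{thm01} and translation invariance, $\calF_n=\bigcup_{j\in\Z}\bigcup_{M\ge 1}(A_M+j)$, where
\[
A_M:=\{\del\in[0,1):T(\del)\le M\};
\]
this being a countable union, it suffices to show that every $A_M$ is nowhere dense and Lebesgue null. Observe that $A_M$ consists of non-terminating numbers whose (unique) base-$n$ expansion has no block of $M+1$ consecutive $0$'s and no block of $M+1$ consecutive $(n-1)$'s. For nowhere density, since $\overline{A_M}\subseteq[0,1]$ it is enough to produce, inside an arbitrary open $I\subseteq(0,1)$, an open subinterval disjoint from $\overline{A_M}$: for $r$ large $I$ contains a level-$r$ cylinder with prefix $c_1,\dots,c_r$, and the sub-cylinder $C$ of points whose expansion begins $c_1,\dots,c_r,\underbrace{0,\dots,0}_{M+1},\dots$ satisfies $C\cap A_M=\emptyset$ --- a non-terminating point of $C$ has an $(M+1)$-run of $0$'s, while a terminating point of $C$ has $T=\infty$ --- so the interior of $C$ misses $A_M$, hence $\overline{A_M}$. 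For Lebesgue nullity, partition the digit positions into consecutive blocks of length $M+1$ and let $E_i\subseteq[0,1)$ be the set of $x$ whose $i$-th block is identically $0$; an elementary cylinder computation gives $\mu\bigl(\bigcap_{i=1}^{K}E_i^{\,c}\bigr)=(1-n^{-(M+1)})^{K}$, while $A_M\subseteq\bigcap_{i=1}^{K}E_i^{\,c}$ for every $K$, so $\mu(A_M)\le(1-n^{-(M+1)})^{K}\to0$ and $\mu(A_M)=0$.

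The one genuine subtlety --- and the place where care is needed --- is the bookkeeping around non-unique base-$n$ expansions: one must be certain that every point assigned to the complement of $A_M$ (the various $k/n^{j}$, and the points of the ``bad'' sub-cylinders $C$) really does violate $T\le M$, and this is exactly where the fact that a terminating number's finest expansion ends in $0$'s, hence has $T=\infty$, is used.
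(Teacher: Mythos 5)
Your proof is correct, but it takes a genuinely different route from the paper's. You decompose $\calF_n\cap[0,1)$ by tie length, $\bigcup_M\{T(\del)\le M\}$, leaning on the characterization of Theorem \ref{thm01} throughout, whereas the paper decomposes by the size of the far constant, $\calF_n=\bigcup_l A_l$ with $A_l=\{\del: |\del-k/n^m|\ge \tfrac{1}{l}n^{-m}\}$, and works directly from Definition \ref{defn01}. The contrast is sharpest in the measure-zero step: the paper runs a Lebesgue differentiation argument (a density point of a measurable hull of $A_l$ would force $A_l$ to nearly fill an interval $[\frac{m_0-1}{n^{j_0}},\frac{m_0+1}{n^{j_0}}]$, contradicting the excluded neighborhood of $\frac{m_0}{n^{j_0}}$), while you get the same conclusion from an elementary cylinder computation, $\mu(A_M)\le(1-n^{-(M+1)})^K\to0$, which is shorter and avoids outer/inner measure bookkeeping. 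Your nowhere-density argument (a sub-cylinder forcing an $(M+1)$-run of zeros) is the digit-level analogue of the paper's (an excluded interval around each $n$-adic point), and your density argument constructs an explicit far number in each cylinder where the paper simply cites that the non-$n$-adic rationals are dense and far. You also correctly handle the one delicate point, namely that terminating expansions have $T=\infty$ under the paper's ``finest expansion'' convention, so the $n$-adic rationals in your bad cylinders genuinely lie outside $A_M$. The trade-off: your argument is more self-contained at the digit level but depends on Theorem \ref{thm01}; the paper's measure-zero proof needs only the definition of far numbers.
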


\begin{proof}
It suffices for us to consider the set $\calF_n \cap [0, 1)$ (we still denote it as $\calF_n$), since $\del$ is $n$-far if and only if $\del+1$ is $n$-far. 

\medskip

\textit{$\bullet$ $\calF_n$ is dense in $[0, 1)$.}

\medskip

This is clear since the set $\Q \backslash \left\{ \frac{k}{n^m}, (m, k) \in \Sigma \right\}$ is dense in $[0, 1)$. 

\medskip

\textit{$\bullet$ $\calF_n$ has Lebesgue measure zero.}

\medskip

Let $m^*$ and $m_*$ be the outer measure and inner measure induced from the Lebesgue measure on $[0, 1)$, respectively. For each $l \ge 1$, let
$$
A_l:=\left\{ \del \in [0, 1): \left| \del-\frac{k}{n^m} \right| \ge \frac{1}{l} \cdot \frac{1}{n^m}, \forall m \ge 0, k \in \Z \right\}.  
$$
Hence $\calF_n=\bigcup\limits_{l=1}^\infty A_l$. The claim will follow if we can show $m^*(\calF_n)=0$. We prove it by contradiction. Assuming $m^*(\calF_n)>0$, then there exists some $l>0$, such that $m^*(A_l)>0$ and hence we can find a measurable set $B \subset [0, 1)$ with $A_l \subset B$ and $|B|=m^*(A_l)$.

Since $\chi_B$ is measurable, an application of Lebesgue differentiation theorem yields that
$$
\lim_{h \to 0} \frac{1}{2h} \int_{x-h}^{x+h} \chi_B(y)dy=
\begin{cases}
1, & x \in B; \\
0,  & x \notin B,
\end{cases}
\quad a.e.
$$
Since $m^*(A_l)=|B|>0$, we can take and fix some $x_0 \in A_l$, such that
\begin{equation} \label{eq04}
\lim_{h \to 0} \frac{1}{2h} \int_{x_0-h}^{x_0+h} |\chi_B(y)-1| dy=0.
\end{equation}
For each $j \in \N, j \ge 10$, let $I_j$ be an interval of the form
$$
\left[ \frac{m_j-1}{n^j}, \frac{m_j+1}{n^j} \right], \quad \textrm{for some} \ m_j \in \{1, 2, \dots, n^j-1\}, 
$$
which  satisfies $x_0 \in I_j$. Indeed, for each fixed $j $, there are at most two possibilities of $m_j$ and we can pick any of them and then fix our choice (note that since $x_0$ is $n$-far, it cannot take the form $\frac{k}{n^m}$, which implies we have at most two choices). We claim that
$$
\lim_{j \to \infty} \frac{1}{|I_j|} \int_{I_j} \chi_B(y)dy=1. 
$$
Indeed, for each $j \ge 10$, we have
$$
\frac{1}{|I_j|} \int_{I_j} |\chi_B(y)-1|dy\le 2\cdot \frac{1}{4n^{-j}} \int_{x_0-\frac{2}{n^j}}^{x_0+\frac{2}{n^j}} |\chi_B(y)-1| dy,
$$
where by \eqref{eq04},  the right hand side converges to zero as $j \to \infty$. Hence, the claim follows. Thus, we can pick a $j_0$ large enough, such that
\begin{equation} \label{eq05}
\frac{\left|B \cap I_{j_0} \right|}{2n^{-j_0}}>1-\frac{1}{10l}. 
\end{equation}
Fix $j_0$.  Since $|B|=m^*(A_l)+m_*(B \backslash A_l)$ and $|B|=m^*(A_l)$, we have $m_*(B \backslash A_l)=0$ and in particular, 
$$
m_*((B \backslash A_l) \cap I_{j_0})=0,
$$
which, combining with the fact that $|B \cap I_{j_0}|=m^*(A_l \cap I_{j_0})+m_*((B \backslash A_l) \cap I_{j_0})$, implies that
$$
|B \cap I_{j_0}|=m^*(A_l \cap I_{j_0}). 
$$
Hence, by \eqref{eq05}, we have
\begin{equation} \label{eq06}
\frac{m^*(A_l \cap I_{j_0})}{2n^{-j_0}}>1-\frac{1}{10l}. 
\end{equation}
However, by the definition of $A_l$, 
$$
\left( \frac{m_0}{n^{j_0}}-\frac{1}{ln^{j_0}},  \frac{m_0}{n^{j_0}}+\frac{1}{ln^{j_0}} \right) \nsubseteq A_l,
$$
which implies
$$
\frac{m^*(A_l \cap I_{j_0})}{2n^{-j_0}} \le \frac{\frac{2}{n^{j_0}}-\frac{2}{ln^{j_0}}}{\frac{2}{n^{j_0}}}=1-\frac{1}{l},
$$
which contradicts \eqref{eq06}.

\medskip

\textit{$\bullet$ $\calF_n$ is meager.}

\medskip

Since $\calF_n=\bigcup\limits_{l=1}^\infty A_l$, it suffices to show $A_l$ is a nowhere dense set for each $l$. Assume it does not hold for some $l \ge 0$. Then the closure of $A_l$ contains some open set $I$, in particular, it contains an interval of the form $\left(\frac{k-1}{n^m}, \frac{k+1}{n^m} \right)$ for some $m$ sufficiently large and $k \in  \{1, \dots n^m-1\}$. However, by the definition of $A_l$, it does not contain the interval $\left( \frac{k}{n^m}-\frac{1}{ln^m}, \frac{k}{n^m}+\frac{1}{ln^m} \right)$, 
which is a contradiction.
\end{proof}

\bigskip

\section{Translates of the generalized dyadic grids and Mei's lemma}

\bigskip

In this section, we study the translates of the general dyadic grids and generalize Mei's lemma. 

\bigskip

\subsection{Translates of $\calG_s$}

\bigskip

Recall $\calG_s$ is the standard dyadic grid with base $n$, which is defined as
$$
\calG_s=\left\{ \left[ \frac{k}{n^m}, \frac{k+1}{n^m} \right)  \bigg | k, m \in \Z \right\}. 
$$

\begin{defn} \label{defn03}
For any $\del \in \R$, the \emph{translated grid $\calG_s^\del$} of $\calG_s$ is defined as follows:
\begin{enumerate}
\item [(1). ] For $m \ge 0$, the $m$-th generation of $\calG_s^\del$ is defined as 
$$
\calG_{s, m}^\del:=\left\{ \left[ \del+\frac{k}{n^m}, \del+\frac{k+1}{n^m} \right) \bigg | k \in \Z \right\};
$$
\item [(2). ] For $m<0$, $m$ even, the $m$-th generation of  $\calG_s^\del$ is defined as 
$$
\calG_{s, m}^\del:=\left\{ \left[ \del+\sum_{j=\frac{m}{2}+1}^0 \frac{1}{n^{2j}}+\frac{k}{n^m},  \del+\sum_{j=\frac{m}{2}+1}^0 \frac{1}{n^{2j}}+\frac{k+1}{n^m} \right) \bigg | k \in \Z \right\};
$$
\item [(3). ] For $m<0$, $m$ odd, the $m$-th generation of $\calG_s^\del$ is defined as
$$
\calG_{s, m}^\del:=\left\{ \left[ \del+\sum_{j=\frac{m-1}{2}+1}^0 \frac{1}{n^{2j}}+\frac{k}{n^m}, \del+\sum_{j=\frac{m-1}{2}+1}^0 \frac{1}{n^{2j}}+\frac{k+1}{n^m} \right)  \bigg | k \in \Z \right\}.
$$
\end{enumerate}
\end{defn}
It is easy to see that $\calG_s^\del$ is a general dyadic grid with base $n$, as in Definition \ref{defn00}. Also note that in general, $\calG_s^0$ and $\calG_s$ are not the same grid.

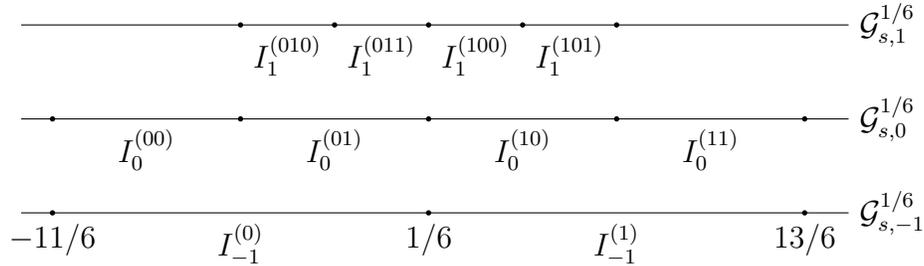
\begin{figure}[ht]
\begin{tikzpicture}[scale=2.5]
\draw(-2, -.5) --(2.4, -.5) node [right] {$\calG_{s, -1}^{1/6}$};
\draw(-2, 0) --(2.4, 0) node [right] {$\calG_{s, 0}^{1/6}$};
\draw(-2, .5) --(2.4, .5) node [right] {$\calG_{s, 1}^{1/6}$};
\fill (1/6, 0) circle [radius=.4pt];
\fill (7/6, 0) circle [radius=.4pt];
\fill (-5/6, 0) circle [radius=.4pt];
\fill (1/6, -.5) circle [radius=.4pt];
\fill (1/6, -.5) node [below] {$1/6$};
\fill (13/6, -.5) node [below] {$13/6$};
\fill (-11/6, -.5) node [below] {$-11/6$};
\fill (13/6, -.5) circle [radius=.4pt];
\fill (-11/6, -.5) circle [radius=.4pt];
\fill (13/6, 0) circle [radius=.4pt];
\fill (-11/6, 0) circle [radius=.4pt];
\fill (-4/3, 0) node [below] {$I^{(00)}_{0}$};
\fill (5/3, 0) node [below] {$I^{(11)}_{0}$};
\fill (1/6, .5) circle [radius=.4pt];
\fill (7/6, .5) circle [radius=.4pt];
\fill (-5/6, .5) circle [radius=.4pt];
\fill (2/3, .5) circle [radius=.4pt];
\fill (-1/3, .5) circle [radius=.4pt];
\fill (-5/6, -.5) node [below] {$I^{(0)}_{-1}$};
\fill (7/6, -.5) node [below] {$I^{(1)}_{-1}$};
\fill (2/3, 0) node [below] {$I^{(10)}_{0}$};
\fill (-1/3, 0) node [below] {$I^{(01)}_{0}$};
\fill (5/12, .5) node [below] {$I^{(100)}_{1}$};
\fill (11/12, .5) node [below] {$I^{(101)}_{1}$};
\fill (-1/12, .5) node [below] {$I^{(011)}_{1}$};
\fill (-7/12, .5) node [below] {$I^{(010)}_{1}$};
\end{tikzpicture}
\caption{$\calG_s^{1/6}$ with base $n=2$.}
\label{sparserangefigure}
\end{figure}

\begin{defn} \label{defn02}
Let $\calG_1$ and $\calG_2$ be two general dyadic grids with base $n$. We say $\calG_1$ and $\calG_2$ are \emph{adjacent} if for each interval $Q \subset \R$, there is an interval $I \subset \R$, such that
\begin{enumerate}
\item[(i). ] $Q \subset I$;
\item [(ii). ] There exists some absolute constant $C>0$, such that $|I| \le C|Q|$;
\item [(iii). ] $I \in \calG_1$ or $I \in \calG_2$.
\end{enumerate}
Namely, the grids $\calG_1$ and $\calG_2$ satisfy a generalized version of Mei's lemma. 
\end{defn}

\begin{rem}
For $n= 2$, the standard dyadic grid, and the $1/3$ shifted dyadic grid, relating back to the ``1/3-trick" stated in the introduction and used in many contexts (such as \cite{AL}) are adjacent.
\end{rem}

\begin{thm} \label{thm02}
The grids $\calG_s$ and $\calG_s^\del$ are adjacent if and only if $\del$ is $n$-far.
\end{thm}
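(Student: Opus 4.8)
The plan is to recast everything in terms of \emph{breakpoints}. For a grid $\calG$ and $m\in\Z$, let $B_m(\calG)$ be the set of left endpoints of the generation-$m$ intervals of $\calG$; from Definition \ref{defn03}, $B_m(\calG_s)=n^{-m}\Z$, while $B_m(\calG_s^\del)=\del+n^{-m}\Z$ for $m\ge 0$ and $B_m(\calG_s^\del)=\del+s_m+n^{-m}\Z$ for $m<0$, where $s_m\in\N$ is the integer shift written out in Definition \ref{defn03} (so $s_{-1}=s_{-2}=1$, $s_{-3}=s_{-4}=1+n^2$, and so on). The elementary observation is that $Q=[a,b)$ is contained in a single generation-$m$ interval of $\calG$ if and only if $B_m(\calG)\cap(a,b)=\emptyset$. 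Since $\calG_s^{\del+1}$ is just $\calG_s^\del$ translated by $1$ (the nonnegative generations are unchanged because $1\in n^{-m}\Z$ for $m\ge 0$, the negative ones move rigidly), while $\calG_s+1=\calG_s$, and adjacency is invariant under translating both grids by the same amount, I may assume $\del\in[0,1)$; this matches the translation invariance $d(\del+1)=d(\del)$ of Remark \ref{rem1}.

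\emph{Adjacent $\Rightarrow$ $n$-far} (contrapositive). Assume $\del$ is not $n$-far, so $T(\del)=\infty$ by Theorem \ref{thm01}, and fix an arbitrary $C>0$; choose $N$ with $n^N\ge C$. By Theorem \ref{thm01} there is a tie of length $M\ge N+1$, and the computation in the necessity part of that proof (allowing the tie to begin at the first digit) produces an $\ell\ge 0$ and a point $P=r/n^\ell\in n^{-\ell}\Z$ with $|\del-P|\le n^{-\ell-M}\le n^{-\ell-N-1}$. Put $j:=\ell+N$, so that all generations $j-N,\dots,j-1$ are $\ge\ell\ge 0$; hence $P\in B_m(\calG_s)$ and $\del\in B_m(\calG_s^\del)$ (with the $s_m$-shift absent, $\del$ taken as the $k=0$ breakpoint) for every $m$ in this range. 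As $|P-\del|<n^{-j}$, choose $Q$ with $n^{-j}<|Q|<n^{-j+1}$ having $P$ and $\del$ in its interior. Then $B_m(\calG_s)\cap(a,b)\ne\emptyset$ and $B_m(\calG_s^\del)\cap(a,b)\ne\emptyset$ for $m=j-N,\dots,j-1$, so $Q$ lies in no interval of either grid of length $\le n^{-j+N}$; hence every interval $I$ of $\calG_s$ or $\calG_s^\del$ containing $Q$ has $|I|\ge n^{-j+N+1}>n^N|Q|\ge C|Q|$. As $C$ was arbitrary, the grids are not adjacent.

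\emph{$n$-far $\Rightarrow$ adjacent.} Fix $C(\del)>0$, and given $Q=[a,b)$ let $j$ satisfy $n^{-j}<|Q|<n^{-j+1}$ (the case $|Q|=n^{-j}$ with $Q$ itself a grid interval is trivial). Suppose, for contradiction, that for each $s\in\{1,\dots,N_0\}$ the interval $Q$ lies in no generation-$(j-s)$ interval of $\calG_s$ and in none of $\calG_s^\del$, where $N_0=N_0(n,\del)$ is to be fixed. Then $B_{j-s}(\calG_s)\cap(a,b)\ne\emptyset$ and $B_{j-s}(\calG_s^\del)\cap(a,b)\ne\emptyset$ for all such $s$. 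Since $\calG_s,\calG_s^\del$ are general dyadic grids, $B_{j-s}(\calG_s)\subseteq B_{j-1}(\calG_s)=n^{1-j}\Z$ and $B_{j-s}(\calG_s^\del)\subseteq B_{j-1}(\calG_s^\del)$ for $s\ge 1$, and each of $B_{j-1}(\calG_s)$, $B_{j-1}(\calG_s^\del)$ has gap exactly $n^{1-j}>|Q|$; so $(a,b)$ contains a unique point $P$ of $B_{j-1}(\calG_s)$ and a unique point $P'$ of $B_{j-1}(\calG_s^\del)$, and moreover $P\in B_{j-N_0}(\calG_s)$, $P'\in B_{j-N_0}(\calG_s^\del)$, whence $|P-P'|<|Q|<n^{1-j}$. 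Write $g:=j-N_0$. If $g\ge 0$, then $P\in n^{-g}\Z$ and $P'\in\del+n^{-g}\Z$, so $P-P'=k/n^g-\del$ with $k\in\Z$ and $(g,k)\in\Sigma$; the definition of $n$-far gives $|P-P'|=|\del-k/n^g|\ge C(\del)/n^g$, forcing $C(\del)<n^{1-N_0}$, impossible once $N_0\ge 2+\log_n(1/C(\del))$. If $g<0$, then $|P-P'|\ge \mathrm{dist}(\del+s_g,n^{|g|}\Z)=n^{|g|}\,\|(\del+s_g)n^{g}\|$, where $\|\cdot\|$ is distance to $\Z$; evaluating the geometric sums defining $s_g$ gives $(\del+s_g)n^{g}=\tfrac{n}{n^2-1}+O(n^{-|g|})$ for $|g|$ odd and $=\tfrac{1}{n^2-1}+O(n^{-|g|})$ for $|g|$ even, so $\|(\del+s_g)n^{g}\|\ge c_n>0$ for a constant $c_n$ depending only on $n$, the one borderline pair $(n,g)=(2,-1)$ yielding $(\del+s_g)n^g=\tfrac{1+\del}{2}$ and $\|(\del+s_g)n^g\|=\tfrac{1-\del}{2}\ge\tfrac12 C(\del)$. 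Either way $|P-P'|\ge\min(c_n,\tfrac12 C(\del))\,n^{|g|}$ while $n^{1-j}=n^{|g|}n^{1-N_0}$, so $\min(c_n,\tfrac12 C(\del))<n^{1-N_0}$, again impossible once $N_0$ is large in terms of $n$ and $C(\del)$. Fixing such an $N_0$, the contradiction shows $Q$ lies in some generation-$(j-s)$ interval $I$ of $\calG_s$ or $\calG_s^\del$ with $1\le s\le N_0$; then $|I|=n^{-j+s}\le n^{N_0}|Q|$, so the grids are adjacent with constant $n^{N_0}$.

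The step I expect to be the crux is the case $g<0$ above: this is exactly where the negative-generation shifts $\sum 1/n^{2j}$ of Definition \ref{defn03} earn their keep — they are chosen precisely so that the breakpoints $\del+s_g+n^{|g|}\Z$ of $\calG_s^\del$ stay a fixed fraction $c_n$ of the block length $n^{|g|}$ away from the breakpoints $n^{|g|}\Z$ of $\calG_s$, uniformly in $g$, with the single exceptional value $(n,g)=(2,-1)$ rescued by $n$-farness. Everything else is bookkeeping with breakpoint lattices and the nesting property of general dyadic grids.
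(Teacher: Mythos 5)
Your proof is correct and follows essentially the same route as the paper's: both directions reduce to the separation of the generation-$m$ endpoint lattices $n^{-m}\Z$ and $\del+s_m+n^{-m}\Z$, handled via $n$-farness for $m\ge 0$ and the explicit geometric-sum shifts for $m<0$ (your $g<0$ analysis, including the borderline $(n,g)=(2,-1)$, is the paper's Cases III--V), while the necessity uses the same small interval straddling $\del$ and a nearby $k/n^{m_0}$. The only organizational difference is that you locate the containing interval by a descent over $N_0$ consecutive generations, whereas the paper directly selects the single generation $m_0$ matched to $|Q|$ via the uniform separation constant $C(\del)$.
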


\begin{proof}
\textit{Necessity.} Assume $\del$ is not $n$-far. Since $\calG_s$ and $\calG_s^\del$ are adjacent, there exists some $N \ge 100$, such that for any interval $Q \subset \R$, there exists an interval $I \subset \R$, such that
\begin{enumerate}
\item[(i). ] $Q \subset I$;
\item [(ii). ] $|I| \le n^N|Q|$;
\item [(iii). ] $I \in \calG_s$ or $I \in \calG_s^\del$.
\end{enumerate}
Since $\del$ is not $n$-far, there exists $m_0 \ge 0$ and $k_0 \in \Z$ (see, Remark \ref{rem1}, 2), such that
\begin{equation} \label{eq07}
\left| \del-\frac{k_0}{n^{m_0}} \right|<\frac{1}{n^{N+1}} \cdot \frac{1}{n^{m_0}}.
\end{equation}

Choose an interval $Q$ containing both $\del$ and $\frac{k_0}{n^{m_0}}$ with $|Q|<\frac{1}{n^{N+1}} \cdot \frac{1}{n^{m_0}}$. We claim that there does not exists an interval $I \in \calG_s$ or $I \in \calG_s^\del$, such that the above (i) and (ii) hold, and hence the necessary part is proved.

\medskip

\textit{Proof of the claim.}

\medskip

Assume $I \in \calG_s^\del$. Then $I \in \calG_{s, m}^\del$, for some $m<0$. This is because that $\del$ is an endpoint of the interval
$$
\left[ \del, \del+\frac{1}{n^m} \right) \in \calG_{s, m}^\del, \ \textrm{for} \ m \ge 0. 
$$
However, 
$$
|I| \ge \frac{1}{n^m} \ge n^N \cdot \frac{1}{n^{N+1}} \cdot \frac{1}{n^{m_0}}>n^N |Q|,  
$$
which contradicts (ii) for $m_0>m$.

\medskip

On the other hand, if $I \in \calG_s$, then $I \in \calG_{s, m}$, where $m \le m_0-1$. Otherwise, the point $\frac{k_0}{n^{m_0}}$ will again become an endpoint of some interval contained in $\calG_{s, m}$. Thus,
$$
|I| \ge \frac{1}{n^{m_0-1}}> n^N \cdot \frac{1}{n^{N+1}} \cdot \frac{1}{n^{m_0}}>n^N |Q|,
$$
which contradicts (ii) again.

\medskip

\textit{Sufficiency.} Let $\del$ be a $n$-far number, and without the loss of generality, we may assume that $\del>0$. Then for any interval $Q \subset R$, we need to show all the conditions in Definition \ref{defn02} are satisfied for the grids $\calG_s$ and $\calG_s^\del$. 

We make the following observation first: for each $m<0$, write
$$
A(m):= \sum_{j=\frac{m}{2}+1}^0 \frac{1}{n^{2j}}=\frac{n^{-m}-1}{n^2-1}, \quad m \ \textrm{even},
$$
and
$$
B(m):= \sum_{j=\frac{m-1}{2}+1}^0 \frac{1}{n^{2j}}=\frac{n^{1-m}-1}{n^2-1},  \quad m \ \textrm{odd}. 
$$
Hence, we can find $C_1=C_1(n)$ and $C_2=C_2(n)$ with $0<C_1, C_2<1$, such that
$$
C_1 n^{-m} \le A(m), B(m) \le C_2 n^{-m}. 
$$
Moreover, take and fix some $N \ge 0$, such that for those $m \le -N$ one has 
\begin{equation} \label{eq08}
n^{-m}>\del \quad  \textrm{and} \quad  \frac{\del}{n^{-m}} \le \min\left\{ \frac{C_1}{100}, \frac{1-C_2}{2} \right\}.
\end{equation}

Now consider the sets of endpoints of the intervals in $\calG_s$ and $\calG_s^\del$.  Namely, for any $m \in \Z$, we consider the sets
$$
A_{s, m}:=\left\{ \frac{k}{n^m}, \bigg | k \in \Z \right\}, \ m \in \Z
$$
and
$$
A_{s, m}^\del:=
\begin{cases}
\left\{ \del+\frac{k}{n^m} \bigg | k \in \Z \right\}, & m \ge 0; \\
\\
\left\{ \del+ \sum\limits_{j=\frac{m}{2}+1}^0 \frac{1}{n^{2j}}+\frac{k}{n^m} \bigg | k \in \Z \right\},  & m<0, \ m \ \textrm{even}; \\
\\
\left\{ \del+ \sum\limits_{j=\frac{m-1}{2}+1}^0 \frac{1}{n^{2j}}+\frac{k}{n^m} \bigg | k \in \Z \right\}, & m<0, \ m \ \textrm{odd}. 
\end{cases}
$$

\medskip

\textit{Claim: There exists a constant $C=C(\del)>0$, such that for any $m \in \Z$, 
\begin{equation} \label{eq10}
|a-b| \ge \frac{C(\del)}{n^m},  \quad \textrm{where} \ a, b \in A_{s, m}^\del \cup A_{s, m}, a \neq b. 
\end{equation}
}

\medskip

\textit{Proof of the claim.} We prove the claim by considering different cases.

\medskip

\textit{Case I: $a, b \in A_{s, m}$ or $a, b \in A_{s, m}^\del$.} Clearly, we have
$$
|a-b| \ge \frac{1}{n^m}. 
$$

\medskip

\textit{Case II: $a \in A_{s, m}^\del$ and $b \in A_{s, m}$, $m \ge 0$.} Since $\del$ is $n$-far, we have, for some $k \in \Z$, 
$$
|a-b|= \left| \del-\frac{k}{n^m} \right| \ge \frac{d(\del)}{n^m}.
$$

\medskip

\textit{Case III: $a \in A_{s, m}^\del$ and $b \in A_{s, m}$, $m \le -N$ and $m$ even.} By the definition of $A_{s, m}^\del$ and $A_{s, m}$, we know 
$$
a-b=\del+A(m)+\frac{k}{n^m}
$$
for some $k \in \Z$, which, combining with \eqref{eq08}, implies there exists some $C_1'=C_1'(C_1, C_2)$, such that
\begin{equation} \label{eq09}
|a-b| \ge \frac{C_1'}{n^m}.
\end{equation}

\medskip

\textit{Case IV: $a \in A_{s, m}^\del$ and $b \in A_{s, m}$, $m \le -N$ and $m$ odd.} This case is similar to Case III, with replacing $A(m)$ by $B(m)$, and hence we can still conclude the inequality \eqref{eq08} holds for this case. 

\medskip

\textit{Case V: $a \in A_{s, m}^\del$ and $b \in A_{s, m}$, $-N<m<0$.} Note that we have only finitely many choices for $m$. Using the fact that $\del$ is not an integer, we can find some $C_1''>0$, such that
$$
|a-b| \ge \frac{C_1''}{n^m}.
$$
Thus, the claim follows if we let $C(\del):=\min\left\{1, d(\del), C_1', C_1''\right\}$. 

For any interval $Q \subset \R$, there exists a $m_0 \in \N$, such that
$$
\frac{C(\del)}{n^{m_0+1}} \le |Q|<\frac{C(\del)}{n^{m_0}}.
$$
Then applying \eqref{eq10}, we have for any $a, b \in A_{s,m_0} \cup A_{s, m_0}^\del$, we have
$$
|a-b| \ge \frac{C(\del)}{n^{m_0}}>|Q|.
$$
Therefore, there is at most one point that belongs to both $Q$ and $A_{s, m_0} \cup A_{s, m_0}^\del$, that is, $Q \cap A_{s, m_0}=\emptyset$ or $Q \cap A_{s, m}^\del=\emptyset$. Thus, $Q$ has to be contained in some $I \in \calG_{s, m_0}$ or $I \in \calG_{s, m_0}^\del$, with
$$
|I|=n^{-m_0} \le \frac{n}{C(\del)} \cdot |Q|. 
$$
The proof is complete. 
\end{proof}

\begin{rem}
 We note that \cite[Proposition 2.2]{LPW} is part of a particular case of Theorem \ref{thm02} when $n=2$.  In the proof of \cite[Proposition 2.2]{LPW}, the authors simply chose $C(\del)$ as $d(\del)$, that is, they only consider the Case II above.  However, we also need to consider the contribution of $A(m)$ or $B(m)$ when $m$ is negative: for example, one can choose some $\del>0$ such that $d(\del)>C_1'$, which leads the estimate
$$
|a-b| \ge \frac{d(\del)}{n^m}
$$
failing for Case III and Case IV. 
\end{rem}

\medskip

\subsection{General translates and generalized Mei's lemma}

\medskip

In this section, we generalize Mei's lemma to any pair of general dyadic grids with base $n$. The key observation is that any translate is uniquely determined by a number $\del$ and a location function $\calL$. 

We shall give the definition of $\calL$ first. Consider an infinite sequence
$$
\textbf{a}:=\{a_0, a_1, \dots, a_j, \dots \}
$$
where $a_i \in \{1, 2, \dots, n-1\}$. The \emph{location function associated to $\textbf{a}$,} which maps $\N$ into $\N$, is defined as
$$
\calL_{\textbf{a}}(j):=\sum_{i=0}^{j-1} a_i n^i, \quad j \ge 1
$$
and $\calL_{\textbf{a}}(0)=0$. Clearly, for any $\textbf{a}$, $\calL_{\textbf{a}}(j) \in \left\{0, \dots,  n^j-1 \right\}, \forall j \in \N$ and moreover, we shall see later that $\calL_{\textbf{a}}(j)$ indeed reflects the location of the origin after translating $j$ times. Note that in Definition \ref{defn03}, we are considering the special choice 
$$
\textbf{a}=\{1, 0, 1, 0, \dots, 1, 0, \dots\}
$$
with the location function 
$$
\calL_{\textbf{a}}(j)=\sum_{i=0}^{\left \lfloor{\frac{j}{2}}\right \rfloor} n^{2i},
$$
where $\lfloor \cdot \rfloor$ is the floor function. This observation suggests us to introduce the following definition.

\begin{defn} \label{defn04}
Let $\del \in \R$, $\textbf{a}$ and $\calL_{\textbf{a}}$ defined as above. Let $\calG(\del, \calL_{\textbf{a}})$ be the collection of the following intervals:
\begin{enumerate}
\item [1.] For $m \ge 0$, the $m$-th generation of $\calG(\del, \calL_{\textbf{a}})$  is defined as
$$
\calG(\del, \calL_{\textbf{a}}) _m:=\left\{ \left[ \del+\frac{k}{n^m}, \del+\frac{k+1}{n^m} \right) \bigg | k \in \Z \right\};
$$
\item [2.] For $m<0$, the $m$-th generation of $\calG(\del, \calL_{\textbf{a}})$ is defined as
$$
\calG(\del, \calL_{\textbf{a}})_m:= \left\{ \left[ \del+\calL_{\textbf{a}}(-m)+\frac{k}{n^m}, \del+\calL_{\textbf{a}}(-m)+\frac{k+1}{n^m} \right) \bigg | k \in \Z \right\}. 
$$
\end{enumerate}
Or equivalently, one can also use the set of endpoints of each generation to define $\calG(\del, \calL_{\textbf{a}})$. Namely, 
\begin{enumerate}
\item [1.] For $m \ge 0$, the set of the endpoints of the $m$-th generation is defined as
$$
A(\del, \calL_{\textbf{a}})_m:=\left\{ \del+\frac{k}{n^m} \bigg | k \in \Z \right\};
$$
\item [2.] For $m<0$, the set of the endpoints of the $m$-th generation is defined as
$$
A(\del, \calL_{\textbf{a}})_m:=\left\{ \del+\calL_{\textbf{a}}(-m)+\frac{k}{n^m} \bigg | k \in \Z \right\}. 
$$
\end{enumerate}
\end{defn}
For example, we can write the standard grid $\calG_s$ as  $\calG(0, \calL_{\{0, \dots, 0, \dots \}})$ and $\calG_s^\del$ as $\calG(\del, \calL_{ \{1, 0, \dots, 1, 0, \dots\} })$. 

\begin{prop}\label{prop100}
$\calG(\del, \calL_{\textbf{a}})$ is a general dyadic grid with base $n$. 
\end{prop}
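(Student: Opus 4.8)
The plan is to verify the three defining conditions of Definition \ref{defn00} directly, with essentially all of the work concentrated in condition (ii). Condition (i) is immediate: by construction every interval in the $m$-th generation $\calG(\del, \calL_{\textbf{a}})_m$ has length $n^{-m}$, so every sidelength is an integer power of $n$. Condition (iii) is almost as easy: for a fixed $m$, the $m$-th generation is obtained from the partition $\{[k/n^m, (k+1)/n^m) : k \in \Z\}$ of $\R$ by translating every interval by a single real number, namely $\del$ when $m \ge 0$ and $\del + \calL_{\textbf{a}}(-m)$ when $m < 0$; a translate of a partition of $\R$ is again a partition of $\R$ into left-closed, right-open intervals.

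The substance is condition (ii), the nestedness of the grid, and I would reduce it to the following claim: the endpoint sets of Definition \ref{defn04} are increasing in the generation index, i.e.
\[
A(\del, \calL_{\textbf{a}})_m \subseteq A(\del, \calL_{\textbf{a}})_{m+1} \qquad \text{for every } m \in \Z .
\]
Granting this, transitivity gives $A(\del,\calL_{\textbf{a}})_m \subseteq A(\del,\calL_{\textbf{a}})_{m'}$ whenever $m \le m'$. Since by condition (iii) the $m'$-th generation tiles $\R$, and the two endpoints of any $Q \in \calG(\del,\calL_{\textbf{a}})_m$ lie in $A(\del,\calL_{\textbf{a}})_m \subseteq A(\del,\calL_{\textbf{a}})_{m'}$, the interval $Q$ is exactly a finite disjoint union of intervals of the $m'$-th generation. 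Hence for $Q$ in generation $m$ and $R$ in generation $m' \ge m$ we get $R \subseteq Q$ or $R \cap Q = \emptyset$; taking $m'$ to be the larger of the two generation indices (and using that distinct intervals within a single generation are disjoint), this yields $Q \cap R \in \{Q, R, \emptyset\}$ for arbitrary $Q, R$ in the grid.

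To prove the claim I would split into three ranges. For $m \ge 0$ one has $\del + k/n^m = \del + kn/n^{m+1}$, so $A(\del,\calL_{\textbf{a}})_m \subseteq A(\del,\calL_{\textbf{a}})_{m+1}$ directly. For the transition from $m = -1$ to $m = 0$: an endpoint of generation $-1$ has the form $\del + \calL_{\textbf{a}}(1) + kn = \del + a_0 + kn$ with $a_0 + kn \in \Z$, so it lies in $A(\del,\calL_{\textbf{a}})_0$. For $m = -j$ with $j \ge 2$, using the one-step recursion $\calL_{\textbf{a}}(j) = \calL_{\textbf{a}}(j-1) + a_{j-1} n^{j-1}$ built into the definition of $\calL_{\textbf{a}}$, an endpoint of generation $-j$ equals
\[
\del + \calL_{\textbf{a}}(j) + k n^j = \del + \calL_{\textbf{a}}(j-1) + (a_{j-1} + kn) n^{j-1},
\]
which is an endpoint of generation $-(j-1)$. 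This last computation is the only place where the precise form of the location function enters; I expect it to be the ``crux'' only in the sense that it is the step one must not skip, as there is no genuine difficulty in it. The remaining bookkeeping — that a generation tiles $\R$, that its endpoint set consists exactly of left endpoints of its intervals, and that an interval cut at interior grid points decomposes into smaller grid intervals — is routine and I would state it briefly rather than belabor it.
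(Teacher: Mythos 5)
Your proof is correct and follows essentially the same route as the paper's: both rest on the one-step recursion $\calL_{\textbf{a}}(j)=\calL_{\textbf{a}}(j-1)+a_{j-1}n^{j-1}$, which the paper phrases as inductively choosing the dyadic parent of the distinguished interval at each negative generation and then declares the grid axioms "easy to check." Your version simply makes that last step explicit by verifying the nesting of endpoint sets $A(\del,\calL_{\textbf{a}})_m\subseteq A(\del,\calL_{\textbf{a}})_{m+1}$, which is a cleaner and more complete write-up of the same idea.
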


\begin{proof}
It suffices for us to consider the case when $\del=0$, as the grid $\calG(\del, \calL_{\textbf{a}})$ is obtained by traslating the grid $\calG(0, \calL_{\textbf{a}})$ with $\del$ units. 

When $m \ge 0$, it is clear that all the intervals with side length $\frac{1}{n^m}$ are uniquely determined, as they are the dyadic children of the intervals $[k, k+1), k \in \Z$, moreover, these intervals are of the form
$$
\left[ \frac{k}{n^m}, \frac{k+1}{n^m} \right), \ m \ge 0, k \in \Z. 
$$

When $m<0$, it is more convenient to understand the construction in Definition \ref{defn04} inductively. Indeed, when $m=-1$, we see that
$$
\calG(0, \calL_{\textbf{a}})_{-1}=\left\{ \left[a_0+kn, a_0+(k+1)n \right) \bigg | k \in \Z \right\}
$$
for some $a_0 \in \{0, 1, \dots, n-1\}$. Note that the $n$ different choices of $a_0$ have a one-to-one correspondence to the $n$ different ways to choose the dyadic parent of the interval $[0, 1)$ in $(-1)$-th generation, which, once this is fixed, will determine the whole $(-1)$-th generation. Moreover, we can view $\calL_{\textbf{a}}(1)=a_0$ as the location of the origin point after we translate once. Applying this process inductively, when $m \le -2$, we can view
\begin{eqnarray*}
&&\calG(\del, \calL_{\textbf{a}})_m=\bigg\{ \bigg[ \calL_{\textbf{a}}(-m-1)+a_{(-m-1)}n^{-m-1}+kn^{-m}, \\
&& \quad \quad \quad \quad \quad \quad \quad\quad \quad \calL_{\textbf{a}}(-m-1)+a_{(-m-1)}n^{m-1}+(k+1)n^{-m} \bigg) \bigg| k \in \Z \bigg\}. 
\end{eqnarray*}
Again, the $n$ different choices of $a_{(-m-1)}$ corresponds to the $n$ different ways to choose the dyadic parent of the interval 
$$
\left[ \calL_{\textbf{a}}(-m-1), \calL_{\textbf{a}}(-m-1)+n^{-m-1}\right),
$$ 
where the quantity $\calL_{\textbf{a}}(-m)$ represents the location of the original point after we translate $m$ times. From these constructions, it is easy to check that all the conditions in Definition \ref{defn00} are satisfied, so $\calG(0, \calL_{\textbf{a}})$ is a general dyadic grid with base $n$, as is $\calG(\del, \calL_{\textbf{a}})$.
\end{proof}

A natural question to ask is "for what $\del_1, \del_2, \textbf{a}$ and $\textbf{b}$, are the $n$-grids $\calG(\del_1, \calL_{\textbf{a}})$ and $\calG(\del_2, \calL_{\textbf{b}})$ are adjacent?"  Motivated by Theorem \ref{thm02}, we have the following result.

\begin{thm} \label{thm03}
Let $\del_1, \del_2 \in \R$ and $\calL_{\textbf{a}}, \calL_{\textbf{b}}$ be defined as in Definition \ref{defn04}. Then $\calG(\del_1, \calL_{\textbf{a}})$ and $\calG(\del_2, \calL_{\textbf{b}})$ are adjacent if and only if
\begin{enumerate}
\item[(i).] $\del_1-\del_2$ is $n$-far;
\item [(ii).] 
\begin{equation} \label{eq21}
0<\liminf_{j \to +\infty} \left| \frac{\calL_{\textbf{a}}(j)-\calL_{\textbf{b}}(j)}{n^j} \right| \le \limsup_{j \to +\infty} \left| \frac{\calL_{\textbf{a}}(j)-\calL_{\textbf{b}}(j)}{n^j} \right|<1.
\end{equation}
\end{enumerate}
\end{thm}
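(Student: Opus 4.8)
The plan is to follow the strategy of Theorem~\ref{thm02}: reduce adjacency to a single separation statement about the endpoints of the two grids, generation by generation, and then read conditions (i) and (ii) off from that statement. For $m\in\Z$ write $A^{(1)}_m:=A(\del_1,\calL_{\textbf{a}})_m$ and $A^{(2)}_m:=A(\del_2,\calL_{\textbf{b}})_m$ for the endpoint sets of the $m$-th generations of the two grids (Definition~\ref{defn04}). The first step is to prove the following \emph{reduction lemma}: $\calG(\del_1,\calL_{\textbf{a}})$ and $\calG(\del_2,\calL_{\textbf{b}})$ are adjacent if and only if there is a constant $C>0$ with
\[
|a-b|\ \ge\ \frac{C}{n^{m}}\qquad\text{for every }m\in\Z,\ \text{every }a\in A^{(1)}_m\text{ and every }b\in A^{(2)}_m .
\]
This one inequality simultaneously forces $A^{(1)}_m\cap A^{(2)}_m=\emptyset$ and keeps the two interleaved arithmetic progressions quantitatively apart; the within-generation spacing $n^{-m}$ never enters. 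It is exactly the overlap clause that will make condition (i) read ``$n$-far'', excluding the values $k/n^{m}$ for all $m\ge0$ (and hence all integers), and not merely ``$d(\cdot)>0$''.

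Both halves of the reduction lemma are modeled on the corresponding halves of Theorem~\ref{thm02}. For sufficiency, given such a $C$ (which we may take $\le1$) and an arbitrary interval $Q$, choose $m_0$ with $Cn^{-m_0-1}\le|Q|<Cn^{-m_0}$; then no two points of $A^{(1)}_{m_0}$ lie within $|Q|$ of each other (they are $\ge n^{-m_0}$ apart), the same for $A^{(2)}_{m_0}$, and no point of $A^{(1)}_{m_0}$ lies within $|Q|$ of a point of $A^{(2)}_{m_0}$ (by the displayed inequality), so $Q$ meets at most one of the two sets; hence $Q$ avoids the endpoints of one of the generations $\calG(\del_1,\calL_{\textbf{a}})_{m_0}$, $\calG(\del_2,\calL_{\textbf{b}})_{m_0}$ and therefore lies inside an interval $I$ of that generation with $|I|=n^{-m_0}\le(n/C)|Q|$. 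For necessity, if the inequality fails for every $C$ then, taking $C=n^{-N-2}$ with $n^{N}$ a constant witnessing adjacency, we find a level $m$ and points $a\in A^{(1)}_m$, $b\in A^{(2)}_m$ with $|a-b|<n^{-N-2-m}$ (possibly $a=b$); choosing a short interval $Q$ with $a,b$ in its interior and $|Q|<n^{-N-1-m}$, and using that an endpoint of the $m$-th generation of a general dyadic grid is automatically an endpoint of every finer generation (a consequence of Definition~\ref{defn00}(ii)--(iii)), one checks that any $I$ in either grid with $Q\subset I$ must belong to a generation coarser than the $m$-th, so $|I|\ge n^{1-m}>n^{N}|Q|$, contradicting adjacency.

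It remains to decide when the displayed inequality holds for some $C>0$, and here I split on the sign of $m$. For $m\ge0$ one has $a-b=(\del_1-\del_2)-k/n^{m}$ for some $k\in\Z$, so the inequality over $m\ge0$ holds for some $C>0$ exactly when $d(\del_1-\del_2)>0$, i.e.\ (Remark~\ref{rem1}(2)) exactly when $\del_1-\del_2$ is $n$-far — condition (i). For $m<0$, set $j:=-m\ge1$; then $a-b=(\del_1-\del_2)+\bigl(\calL_{\textbf{a}}(j)-\calL_{\textbf{b}}(j)\bigr)-kn^{j}$ for some $k\in\Z$, so the inequality over $m<0$ holds for some $C>0$ exactly when the distance from
\[
t_j\ :=\ \frac{\del_1-\del_2}{n^{j}}+\frac{\calL_{\textbf{a}}(j)-\calL_{\textbf{b}}(j)}{n^{j}}
\]
to $\Z$ is bounded below uniformly in $j\ge1$. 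Granting (i), $\del_1-\del_2$ is not an integer (no integer is $n$-far), hence $t_j\notin\Z$ for every $j$, so only the behavior as $j\to\infty$ matters; and since $(\del_1-\del_2)/n^{j}\to0$ while $(\calL_{\textbf{a}}(j)-\calL_{\textbf{b}}(j))/n^{j}\in(-1,1)$, the distance from $t_j$ to $\Z$ equals $\min(s_j,1-s_j)+o(1)$, where $s_j:=\bigl|\calL_{\textbf{a}}(j)-\calL_{\textbf{b}}(j)\bigr|/n^{j}$. An elementary check shows $\liminf_{j\to\infty}\min(s_j,1-s_j)>0$ precisely when $0<\liminf_j s_j$ and $\limsup_j s_j<1$, i.e.\ precisely when \eqref{eq21} holds. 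Since the $m\ge0$ part already fails when (i) fails, combining the two ranges of $m$ with the reduction lemma gives: the grids are adjacent if and only if (i) and (ii).

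I expect both halves of the reduction lemma to be essentially routine once the endpoint-persistence property of general dyadic grids is isolated. The step that will take the most care is the negative-generation analysis: one must recognize that the relevant quantity is the size of $\calL_{\textbf{a}}(j)-\calL_{\textbf{b}}(j)$ \emph{relative to, and modulo, $n^{j}$} — which is exactly what forces the two-sided bound in \eqref{eq21}, the lower bound preventing the two ``origin locations'' $\calL_{\textbf{a}}(j)$, $\calL_{\textbf{b}}(j)$ from colliding and the upper bound preventing them from differing by nearly a full period $n^{j}$ — together with the observations that the perturbation $(\del_1-\del_2)/n^{j}$ is harmless in the limit and that the non-asymptotic part of the negative-generation condition, namely disjointness of $A^{(1)}_{-j}$ and $A^{(2)}_{-j}$ for the finitely many small $j$, comes for free once (i) holds.
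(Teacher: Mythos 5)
Your proposal is correct and follows essentially the same route as the paper: adjacency is reduced to a uniform, generation-by-generation separation of the endpoint sets (exactly the mechanism of Theorem \ref{thm02}, which the paper invokes as an ``easy modification''), with the $m\ge 0$ generations yielding condition (i) via $d(\del_1-\del_2)>0$ and the $m<0$ generations yielding condition (ii); your distance-to-$\Z$ computation for $t_j$ is just a unified packaging of the paper's two-case construction of the nearby endpoints $p_1,p_2$ (resp.\ $q_1,q_2$) used to prove the necessity of \eqref{eq21}. The only added value is organizational: you state the endpoint-separation equivalence as an explicit ``if and only if'' lemma and verify the endpoint-persistence fact the paper leaves implicit, but no new idea is involved.
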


\begin{rem}
We remark that the standard dyadic grid $\calG$ and the $1/3$ shifted dyadic grid $\calG_s^{1/3} = \calG(1/3, \calL_{ \{1, 0, \dots, 1, 0, \dots\} })$ are adjacent (this is checked in detail preceding Theorem \ref{mainthm}).  The standard dyadic grid $\calG$ and the $1/4$ shifted dyadic grid $\calG_s^{1/4} = \calG(1/4, \calL_{ \{1, 0, \dots, 1, 0, \dots\} })$ are not adjacent since $\frac{1}{2} - \frac{1}{4} = \frac{1}{4}$, which is not $2$-far. 
\end{rem}

\begin{proof}
The proof of the sufficient part and the first half of necessary part is an easy modification of the sufficiency and the necessity of  Theorem \ref{thm02}, respectively, and hence we omit the proof here. Thus, it suffices for us to show that if $\calG(\del_1, \calL_{\textbf{a}})$ and $\calG(\del_2, \calL_{\textbf{b}})$ are adjacent, then the second condition holds.

Since $\calG(\del_1, \calL_{\textbf{a}})$ and $\calG(\del_2, \calL_{\textbf{b}})$ are adjacent, there exists some $C>0$, such that for any interval $Q \subset \R$, there exists an interval $I \subset \R$, such that
\begin{enumerate}
\item [(a).] $Q \subset I$;
\item [(b).] $|I| \le C|Q|$;
\item [(c).] $I \in \calG(\del_1, \calL_{\textbf{a}})$ or $I \in \calG(\del_2, \calL_{\textbf{b}})$.  
\end{enumerate}
We prove the desired result by contradiction. Assume \eqref{eq21} fails and consider two different cases. 

\medskip

\textit{Case I: $\liminf\limits_{j \to +\infty} \left| \frac{\calL_{\textbf{a}}(j)-\calL_{\textbf{b}}(j)}{n^j} \right|=0$.} Take and fix some $j_0>0$, such that
$$
\left| \frac{\del_1-\del_2}{n^{j_0}}\right|<\frac{1}{4C} \quad \textrm{and} \quad  \left| \frac{\calL_{\textbf{a}}(j_0)-\calL_{\textbf{b}}(j_0)}{n^{j_0}} \right|<\frac{1}{4C}. 
$$
Consider the points
$$
p_1:=\del_1+\calL_{\textbf{a}}(j_0), \quad p_2:=\del_2+\calL_{\textbf{b}}(j_0) \in \R.
$$
Clearly, for any $m \ge -j_0$, 
$$
p_1 \in A(\del_1, \calL_{\textbf{a}})_m \ \textrm{and} \ p_2 \in A(\del_2, \calL_{\textbf{b}})_m.
$$
Moreover, $\left|p_1-p_2\right|<\frac{1}{2C} \cdot n^{j_0}$. Then we can choose an open interval $Q$ containing $p_1$ and $p_2$, with length $\frac{n^{j_0}}{2C}$. 

Since $\calG(\del_1, \calL_{\textbf{a}})$ and $\calG(\del_2, \calL_{\textbf{b}})$ are adjacent, we can find some $I \in \calG(\del_1, \calL_{\textbf{a}})$ or $\calG(\del_2, \calL_{\textbf{b}})$, such that the conditions (a), (b) and (c) above are satisfied. Moreover, we have $|I| \ge n^{j_0+1}$. Otherwise, $I \in \calG(\del_1, \calL_{\textbf{a}})_m$ (or $I \in \calG(\del_2, \calL_{\textbf{b}})_m$, respectively) for some $m \ge -j_0$, which is impossible since $p_1 \in A(\del_1, \calL_{\textbf{a}})_m$ (or $p_2 \in A(\del_2, \calL_{\textbf{b}})_m$, respectively). However, 
$$
|I| \ge n^{j_0+1}>\frac{n^{j_0}}{2}=C \cdot \frac{n^{j_0}}{2C}=C|Q|,
$$
which is a contradiction. 

\medskip

\textit{Case II: $\limsup\limits_{j \to +\infty} \left| \frac{\calL_{\textbf{a}}(j)-\calL_{\textbf{b}}(j)}{n^j} \right|=1$.} We may assume that there exists some $j_1> 0$, such that
$$
\left| \frac{\calL_{\textbf{a}}(j_1)-\calL_{\textbf{b}}(j_1)-n^{j_1}}{n^{j_1}} \right|<\frac{1}{4C}. 
$$
Then we consider the points
$$
q_1:=\del_1+\calL_{\textbf{a}}(j_1), \quad q_2:=\del_2+\calL_{\textbf{b}}(j_1)+n^{j_1} \in \R.
$$
The rest of the proof is the same as Case I, and hence we omit it here. 
\end{proof}

As a consequence of Theorem \ref{thm03}, we are able to answer the following general question: given any two $n$-grids $\calG_1$ and $\calG_2$, is there an efficient way to verify $\calG_1 $ and $\calG_2$ are adjacent or not?

We need some preparation. 

\begin{prop} \label{prop200}
Let $\calG$ be any $n$-grid, then there exists some $\del \in \R$ and $\textbf{a}=(a_0, a_1, \dots, a_j, \dots)$, $a_i \in \{0, 1, \dots, n-1\}$, such that $\calG=\calG(\del, \calL_{\textbf{a}})$.
\end{prop}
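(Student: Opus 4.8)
The plan is to reconstruct the two parameters directly from the generations of $\calG$. Write $\calG_m$ for the set of intervals of $\calG$ of sidelength $n^{-m}$, so that $\calG=\bigcup_{m\in\Z}\calG_m$ and, since all intervals are half-open of a fixed length, each $\calG_m$ is determined by its set of left endpoints; it therefore suffices to match these endpoint sets with the sets $A(\del,\calL_{\textbf{a}})_m$ of Definition \ref{defn04}. The one structural fact I will use repeatedly is that for $m<m'$ every interval of $\calG_m$ is a disjoint union of exactly $n^{\,m'-m}$ intervals of $\calG_{m'}$: indeed, by property (ii) a larger interval cannot be strictly contained in a smaller one, so a $\calG_m$-interval $Q$ and a $\calG_{m'}$-interval $R$ satisfy $R\subseteq Q$ or $R\cap Q=\emptyset$; since $\calG_{m'}$ tiles $\R$ by property (iii), $Q$ is the union of those $R$ it contains, and counting lengths gives $n^{\,m'-m}$ of them. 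In particular consecutive left endpoints within a fixed generation are equally spaced.

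First I would fix $\del$: the left endpoints of $\calG_0$ form an arithmetic progression of common difference $1$, so choosing any one of them as $\del$ gives $\calG_0=\{[\del+k,\del+k+1):k\in\Z\}$. Next, the positive generations are forced. By induction on $m\ge 1$: each $\calG_{m-1}$-interval $[\del+kn^{-(m-1)},\del+(k+1)n^{-(m-1)})$ is tiled by $n$ half-open $\calG_m$-intervals of length $n^{-m}$ contained in it, and the only such tiling is the standard subdivision (the piece containing the left endpoint is forced, then the next, and so on). Hence $\calG_m=\{[\del+kn^{-m},\del+(k+1)n^{-m}):k\in\Z\}$, which equals $\calG(\del,\calL_{\textbf{a}})_m$ irrespective of $\textbf{a}$.

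The sequence $\textbf{a}=(a_0,a_1,\dots)$ comes from the negative generations, built by induction on $M\ge 1$ so as to maintain the invariant that the left-endpoint set of $\calG_{-M}$ equals $\del+\calL_{\textbf{a}}(M)+n^M\Z$; note $\calL_{\textbf{a}}(M)=\sum_{i=0}^{M-1}a_i n^i$ depends only on $a_0,\dots,a_{M-1}$. For $M=1$, the left endpoints of $\calG_{-1}$ form a sub-progression of common difference $n$ of those of $\calG_0=\del+\Z$, hence are $\del+a_0+n\Z$ for a unique $a_0\in\{0,\dots,n-1\}$, matching $A(\del,\calL_{\textbf{a}})_{-1}$. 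For the inductive step, $\calG_{-M}$ coarsens $\calG_{-(M-1)}$, so its left endpoints are a sub-progression of common difference $n^M$ sitting inside $\del+\calL_{\textbf{a}}(M-1)+n^{M-1}\Z$; reducing its offset modulo $n$ in units of $n^{M-1}$ produces a unique $a_{M-1}\in\{0,\dots,n-1\}$ with left-endpoint set $\del+\calL_{\textbf{a}}(M-1)+a_{M-1}n^{M-1}+n^M\Z=\del+\calL_{\textbf{a}}(M)+n^M\Z=A(\del,\calL_{\textbf{a}})_{-M}$. Matching all generations gives $\calG=\calG(\del,\calL_{\textbf{a}})$.

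All the computations here are routine; the only place needing care is the last paragraph's bookkeeping — checking that passing from generation $-(M-1)$ to generation $-M$ appends precisely the one new digit $a_{M-1}$ to the partial sum $\calL_{\textbf{a}}(M-1)$, and that the ``reduce the offset modulo $n$'' step is exactly what forces $a_{M-1}\in\{0,\dots,n-1\}$ and makes the invariant propagate. There is no genuine obstacle: the refining/coarsening-by-a-factor-$n$ structure supplied by properties (ii) and (iii) does essentially all of the work, and recognizing that the positive generations carry no freedom at all is what pins down $\del$ cleanly.
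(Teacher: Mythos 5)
Your proof is correct and follows essentially the same strategy as the paper: read off $\del$ from the endpoints of the zeroth generation, note that the positive generations are then forced, and recover each digit $a_{M-1}$ inductively from which of the $n$ possible coarsenings (equivalently, which ancestor of $[\del,\del+1)$) the grid selects at generation $-M$. The paper simply defers the inductive bookkeeping to the proof of Proposition \ref{prop100}, whereas you carry it out explicitly via the offsets of the arithmetic progressions of left endpoints; the content is the same.
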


\begin{proof}
Consider $A_0$, which is the collection of endpoints of the intervals in $\calG_0$. It is clear that there exists only one point belonging to $[0, 1) \cap A_0$, and we fix and label this point $\del$.

Next, from the proof of Proposition \ref{prop100}, we see that $\textbf{a}$ is uniquely determined once we fix the choice of $\del$, as it has a one-to-one correspondence with all the ancestors of $[\del, \del+1)$, which is uniquely determined by $\calG$. 

Hence, $\calG=\calG(\del, \calL_{\textbf{a}})$. 
\end{proof}

\begin{defn}
Let $\del \in \R$ and $\textbf{a}$ be defined as in Proposition \ref{prop200}. We say $\calG(\del, \calL_{\textbf{a}})$ is a \textit{representation of $\calG$} if $\calG=\calG(\del, \calL_{\textbf{a}})$.
\end{defn}

Note that the representation of a $n$-grid may not be unique. For example, one can easily verify that $\calG(0, \calL_{(1, 0, \dots, 0, \dots)})=\calG(2, \calL_{(n-1, n-1, \dots, n-1, \dots)})$.

We are ready to formulate the main result in this section. 

\begin{alg} \label{alg01}
Let $\calG_1$ and $\calG_2$ be any $n$-grids. The following algorithm can be used to check whether $\calG_1$ and $\calG_2$ are adjacent or not.
\begin{enumerate}
\item [\textit{Step I:}] Take any representations of $\calG_1$ and $\calG_2$;
\item [\textit{Step II:}] Check whether these two representations satisfy the conditions in Theorem \ref{thm03}. 
\end{enumerate}
If these conditions are satisfied, then $\calG_1$ and $\calG_2$ are adjacent. Otherwise, they are not. 
\end{alg}

Note that Algorithm \ref{alg01} is well-defined, in the sense that the outcome is independent of the choice of the representations of $\calG_1$ and $\calG_2$. This is guaranteed by Theorem \ref{thm03}. 

\begin{rem}
We illustrate how the above algorithm can be used to check that two grids are adjacent using \ref{thm03}, using our running example of the standard dyadic grid $\calG = \calG(0, \calL_{\{0, \dots, 0, \dots \}})$ and the $1/3$ shifted dyadic grid $\calG_s^{1/3} = \calG(1/3, \calL_{ \{1, 0, \dots, 1, 0, \dots\} })$.  First we check the first condition and see that $\frac{1}{3}$ is $2$-far.  Next we check the second condition and see that
\[
\liminf_{j \to +\infty} \left| \frac{\calL_{\textbf{a}}(j)-\calL_{\textbf{b}}(j)}{n^j} \right|  = \limsup_{j \to +\infty} \left| \frac{\calL_{\textbf{a}}(j)-\calL_{\textbf{b}}(j)}{n^j} \right|  = \sum_{k = 0}^\infty \bigg(\frac{1}{2}\bigg)^{2k+1} = \frac{2}{3}.
\]
Below we will see that if any other representations are chosen for $\calG$ and $\calG_s^{1/3}$, then the $\liminf$ and $\limsup$ above (see \eqref{eq21}) must be either both $\frac{2}{3}$ or both $\frac{1}{3}$.
\end{rem}

Finally, we study a result describing the uniformity of the representation of the grids.  First, let $\calG_1$ and $\calG_2$ be two $n$-grids, which are \emph{not} adjacent, and let $\calG(\del_1, \calL_{\textbf{a}})$ be a representation of $\calG_1$. Then for any representation of $\calG_2=\calG(\del_2, \calL_{\textbf{b}})$ with $\del_1-\del_2$ $n$-far, by Theorem \ref{thm03}, we have either
$$
\liminf_{j \to +\infty} \left| \frac{\calL_{\textbf{a}}(j)-\calL_{\textbf{b}}(j)}{n^j} \right|=0,
$$
or
$$
\limsup_{j \to +\infty} \left| \frac{\calL_{\textbf{a}}(j)-\calL_{\textbf{b}}(j)}{n^j} \right|=1,
$$
which is independent of the choice of a particular representation $\calG_2$.  Hence, we may ask whether there is still some uniformity in the representations of two adjacent grids. It turns out that in a certain sense, there is.  

In Theorem \ref{mainthm} below we show that for adjacent grids the $\liminf$ and $\limsup$ parameters determined by the location functions in \eqref{eq21} must either be identical, or (in a certain sense) inverted, no matter what representation is used.  Particularly, if $C_1$ and $C_2$ denote these (ordered) parameters, any adjacent system will have either matching parameters $C_1$ and $C_2$, or new "inverted" parameters $1-C_2$ and $1-C_1$.  The significance of this is that once the shifts $\delta$ are chosen for the two grids, then the important quantities governed by their location functions in \eqref{eq21} are invariant, up to the permitted inversion.

\begin{thm}
\label{mainthm}
Let $\calG_1$ and $\calG_2$ be two adjacent $n$-grids with representations $\calG(\del_1, \calL_{\textbf{a}})$ and $\calG(\del_2, \calL_{\textbf{b}})$, respectively. Let
$$
C_1:=\liminf_{j \to +\infty} \left| \frac{\calL_{\textbf{a}}(j)-\calL_{\textbf{b}}(j)}{n^j} \right|>0,
$$
and 
$$
C_2:=\limsup_{j \to +\infty} \left| \frac{\calL_{\textbf{a}}(j)-\calL_{\textbf{b}}(j)}{n^j} \right|<1. 
$$
Let further, $\calG(\del_1', \calL_{\textbf{a}'})$ and $\calG(\del_2', \calL_{\textbf{b}'})$ be some other representations of $\calG_1$ and $\calG_2$, respectively. Then, either
$$
\liminf_{j \to +\infty} \left| \frac{\calL_{\textbf{a}'}(j)-\calL_{\textbf{b}'}(j)}{n^j} \right|=C_1 \quad \textrm{and} \quad \limsup_{j \to +\infty} \left| \frac{\calL_{\textbf{a}'}(j)-\calL_{\textbf{b}'}(j)}{n^j} \right|=C_2
$$
or 
$$
\liminf_{j \to +\infty} \left| \frac{\calL_{\textbf{a}'}(j)-\calL_{\textbf{b}'}(j)}{n^j} \right|=1-C_2 \quad \textrm{and} \quad \limsup_{j \to +\infty} \left| \frac{\calL_{\textbf{a}'}(j)-\calL_{\textbf{b}'}(j)}{n^j} \right|=1-C_1.
$$
\end{thm}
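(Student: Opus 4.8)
The plan is to determine precisely how two representations of the \emph{same} $n$-grid can differ, and then to track the effect of such a change on the two quantities in \eqref{eq21}.

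\textbf{Step 1: comparing two representations of one grid.} Suppose $\calG(\del,\calL_{\textbf a})=\calG(\del',\calL_{\textbf{a}'})$. Equating the $0$-th generations, which are the partitions of $\R$ into unit intervals based at $\del$ and at $\del'$, forces $\del'=\del+N$ for some $N\in\Z$. Equating the $(-j)$-th generations for $j\ge 1$ (each a coset of $n^j\Z$) then forces $\del+\calL_{\textbf a}(j)\equiv\del'+\calL_{\textbf{a}'}(j)\pmod{n^j}$, that is, $\calL_{\textbf{a}'}(j)\equiv\calL_{\textbf a}(j)-N\pmod{n^j}$. Since both location values lie in $\{0,\dots,n^j-1\}$, there is a unique integer $\eps_j$ with $\calL_{\textbf{a}'}(j)=\calL_{\textbf a}(j)-N+\eps_j n^j$, and $|\calL_{\textbf{a}'}(j)-\calL_{\textbf a}(j)|<n^j$ forces $\eps_j\in\{-1,0,1\}$ as soon as $n^j>|N|$. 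I will run this both for $\calG_1$ (representations $(\del_1,\textbf a),(\del_1',\textbf{a}')$, integer $N_1$, carries $\eps_j$) and for $\calG_2$ (representations $(\del_2,\textbf b),(\del_2',\textbf{b}')$, integer $N_2$, carries $\eps_j'$).

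\textbf{Step 2: the carry sequences are eventually constant.} Using $\calL_{\textbf{a}'}(j+1)=\calL_{\textbf{a}'}(j)+a_j'n^j$ and the analogous identity for $\textbf a$, the relation from Step 1 gives the recursion $n\,\eps_{j+1}=\eps_j+(a_j'-a_j)$, where $a_j,a_j'\in\{0,\dots,n-1\}$, so $a_j'-a_j\in\{-(n-1),\dots,n-1\}$. For $j$ large, where $\eps_j,\eps_{j+1}\in\{-1,0,1\}$, this leaves only $\eps_{j+1}=0$ or $\eps_{j+1}=\eps_j\in\{-1,1\}$; hence once $\eps_j=0$ it stays $0$, and otherwise $\eps_j$ is constant in $\{-1,1\}$. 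In every case $\eps_j$ is eventually constant, say $\eps_j\to\bar\eps_1\in\{-1,0,1\}$, and likewise $\eps_j'\to\bar\eps_2\in\{-1,0,1\}$. I expect this stabilization of the carries to be the crux; everything afterwards is routine.

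\textbf{Step 3: transfer to the location parameters, and case analysis.} Write $t_j:=\frac{\calL_{\textbf a}(j)-\calL_{\textbf b}(j)}{n^j}$ and $t_j':=\frac{\calL_{\textbf{a}'}(j)-\calL_{\textbf{b}'}(j)}{n^j}$, both in $(-1,1)$. Subtracting the Step 1 identities for $(\textbf a,\textbf{a}')$ and $(\textbf b,\textbf{b}')$ gives, for all large $j$,
\[
t_j'=t_j+e-\frac{N_1-N_2}{n^j},\qquad e:=\bar\eps_1-\bar\eps_2\in\{-2,-1,0,1,2\},\quad \frac{N_1-N_2}{n^j}\to0 .
\]
Since $\del_1'-\del_2'=(\del_1-\del_2)+(N_1-N_2)$ is again $n$-far by Remark \ref{rem1}, Theorem \ref{thm03} applies to the primed representations as well, so both pairs obey \eqref{eq21}; in particular we may use $0<C_1\le C_2<1$. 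If $|e|=2$, then $t_j,t_j'\in(-1,1)$ forces $t_j\to-1$ (if $e=2$) or $t_j\to1$ (if $e=-2$), hence $|t_j|\to1$, contradicting $C_2<1$; so $e\in\{-1,0,1\}$. If $e=0$, then $|t_j'|-|t_j|\to0$ and $(C_1',C_2')=(C_1,C_2)$. If $e=1$, then $t_j'=t_j+1-o(1)\in(-1,1)$ forces $\limsup_j t_j\le0$, so every subsequential limit of $t_j$ is $\le0$; thus $C_1=-\limsup_j t_j$, $C_2=-\liminf_j t_j$, and $C_2<1$ keeps $t_j+1$ bounded below by a positive constant, so $|t_j'|=t_j+1-o(1)$, giving $\liminf_j|t_j'|=1-C_2$ and $\limsup_j|t_j'|=1-C_1$. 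The case $e=-1$ is symmetric ($\liminf_j t_j\ge0$, $t_j'=t_j-1+o(1)$, $|t_j'|=1-t_j+o(1)$), yielding the same pair $(1-C_2,1-C_1)$. In all cases $(C_1',C_2')$ is $(C_1,C_2)$ or $(1-C_2,1-C_1)$, which is the assertion.
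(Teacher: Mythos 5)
Your proof is correct, and its overall skeleton matches the paper's: both arguments reduce the problem to the identity $\calL_{\textbf{a}'}(j)=\calL_{\textbf{a}}(j)-N_1+\eps_j n^j$ (the paper's \eqref{eq333}, with $d(j)=-\eps_j$), pin down the ``carry'' terms for large $j$, and then run a case analysis on the difference of the two carries, obtaining $(C_1,C_2)$ when the difference vanishes and $(1-C_2,1-C_1)$ when it is $\pm 1$. Where you genuinely diverge is at the crux, the stabilization of the carries: the paper argues that $d(j)\in\{0,-1\}$ by a case analysis on whether $\lim_j\calL_{\textbf{a}}(j)$ and $\lim_j\calL_{\textbf{a}'}(j)$ are finite or infinite, and then treats $e(j)-d(j)$ as if it were a single constant in Cases A--C; you instead derive the digit recursion $n\,\eps_{j+1}=\eps_j+(a_j'-a_j)$ and observe that for large $j$ it forces $\eps_{j+1}\in\{0,\eps_j\}$, so each carry sequence is eventually constant. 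This is cleaner and actually closes a soft spot in the paper's write-up (the paper never explicitly rules out $e(j)-d(j)$ oscillating among $\{-1,0,1\}$, and its Case II conclusion that $d(j)=-1$ is dubious when $\calL_{\textbf{a}'}(j)\to\infty$ with $\calL_{\textbf{a}'}(j)=o(n^j)$). You also handle arbitrary $N_1,N_2\in\Z$ directly and dispose of the case $|\bar\eps_1-\bar\eps_2|=2$ via $C_2<1$, whereas the paper restricts to $N_1,N_2\ge 0$ first (which confines each carry to $\{0,-1\}$ and so bounds the difference by $1$ automatically) and then appeals to a renaming for the general case. The final limit computations in your $e=0,\pm1$ cases coincide with the paper's Cases A, B, C.
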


\begin{proof}
Since $A(\del_1, \calL_{\textbf{a}})_0=A(\del_1', \calL_{\textbf{a}'})_0$, we have $\del_1'=\del_1+N_1$ for some $N_1 \in \Z$.
Similarly, $\del_2'=\del_2+N_2$ for some $N_2 \in \Z$.

First, we consider the special case when $N_1, N_2 \ge 0$. Since $A(\del_1, \calL_{\textbf{a}})_m=A(\del_1', \calL_{\textbf{a}'})_m$, we have for each $j \in \mathbb{N},  j>0$, 
$$
\del_1+\calL_{\textbf{a}}(j)=\del_1'+\calL_{\textbf{a}'}(j)+d(j)n^j,
$$
where $d(j)$ is some integer depending on $j$.  Since $\del_1'=\del_1+N_1$, it follows that
\begin{equation} \label{eq333}
\calL_{\textbf{a}}(j)=\calL_{\textbf{a}'}(j)+N_1+d(j)n^j, \ j \in \N, \  j>0. 
\end{equation}

\medskip

\textit{Claim: $d(j)=0$ or $-1$ for $j$ large enough.}

\medskip

\textit{Proof of the claim:} Recall that the functions $\calL_{\textbf{a}}$ and $\calL_{\textbf{a}'}$ are non-decreasing and for each $j \ge 0$, the quantities $\calL_{\textbf{a}}(j)$ and $\calL_{\textbf{a}'}(j)$ take the values in $\{0, 1, \dots, n^j-1\}$. Moreover, for each $j \ge 0$, 
$$
-n^j<\calL_{\textbf{a}}(j)-\calL_{\textbf{a}'}(j)<n^j.
$$
These facts suggest that for $l$ large enough, $d(j)$ can only be $-1, 0$ or $1$. We consider the following cases.

\medskip

\textit{Case I: $\lim\limits_{j \to \infty} \calL_{\textbf{a}}(j)<\infty$ and $\lim\limits_{j \to \infty} \calL_{\textbf{a}'}(j)<\infty$. } By \eqref{eq333}, it is clear that $d(j)=0$ for $j$ large enough.

\medskip

\textit{Case II: $\lim\limits_{j \to \infty} \calL_{\textbf{a}}(j)<\infty$ and $\lim\limits_{j \to \infty} \calL_{\textbf{a}'}(j)=\infty$. } Write \eqref{eq333} as 
$$
\calL_{\textbf{a}}(j)-\calL_{\textbf{a}'}(j)=N_1+d(j)n^j,
$$
which implies that $d(j)$ can only equal to $-1$ when $j$ is large. 

\medskip

\textit{Case III: $\lim\limits_{j \to \infty} \calL_{\textbf{a}}(j)=\infty$ and $\lim\limits_{j \to \infty} \calL_{\textbf{a}'}(j)<\infty$. } Again, the equation $ \calL_{\textbf{a}}(j)-\calL_{\textbf{a}'}(j)=N_1+d(j)n^j$ suggests $d(j)$ can only take the value $1$ when $j$ is large. Then for $j$ large enough, 
$$
\calL_{\textbf{a}}(j)-\calL_{\textbf{a}'}(j)<n^j \le N_1+n^j=N_1+d(j)n^j,
$$
which contradicts \eqref{eq333} and Case III cannot happen. 

\medskip

\textit{Case IV: $\lim\limits_{j \to \infty} \calL_{\textbf{a}}(j)=\lim\limits_{j \to \infty} \calL_{\textbf{a}'}(j)=\infty$. }  We claim $0$ is the only possible choice for $d(j)$ for $j$ large enough. If $d(j)=1$ for $j$ large enough, then the same argument in Case III will lead to the desired contradiction; if $d(j)=-1$ for $j$ large enough, then \eqref{eq333} implies
$$
\calL_{\textbf{a}}(j)+n^j=\calL_{\textbf{a}'}(j)+N_1,
$$
which is also impossible since $n^j>\calL_{\textbf{a}'}(j)$ and $\calL_{\textbf{a}}(j)>N_1$ for $j$ large enough. 

Applying the same argument to the grid  $\calG_2$, we have
\begin{equation} \label{eq334}
\calL_{\textbf{b}}(j)=\calL_{\textbf{b}'}(j)+N_2+e(j)n^j, \quad j \in \N, \ j>0,
\end{equation}
where $e(j)$ is some integer depending on $j$ and $e(j)=0$ or $-1$ for $j$ large enough.

Thus, by \eqref{eq333} and \eqref{eq334}, we have
\begin{equation} \label{eq335}
 \frac{\calL_{\textbf{a}'}(j)-\calL_{\textbf{b}'}(j)}{n^j} =  \frac{\calL_{\textbf{a}}(j)-\calL_{\textbf{b}}(j)}{n^j}+\frac{N_2-N_1}{n^j}+\left( e(j)-d(j) \right)
\end{equation}
and hence for large $j$, we have three cases.

\medskip

\textit{Case A: $e(j)-d(j)=0$.} For this case, we have
$$
 \frac{\calL_{\textbf{a}'}(j)-\calL_{\textbf{b}'}(j)}{n^j} =  \frac{\calL_{\textbf{a}}(j)-\calL_{\textbf{b}}(j)}{n^j}+\frac{N_2-N_1}{n^j}, \quad \textrm{for $j$ large enough}.
$$
This implies
$$
\liminf_{j \to +\infty} \left |  \frac{\calL_{\textbf{a}'}(j)-\calL_{\textbf{b}'}(j)}{n^j}  \right|=C_1 \quad \textrm{and} \quad \limsup_{j \to +\infty} \left |  \frac{\calL_{\textbf{a}'}(j)-\calL_{\textbf{b}'}(j)}{n^j}  \right|=C_2.
$$

\medskip

\textit{Case B: $e(j)-d(j)=1$.} By \eqref{eq335}, we have
$$
 \frac{\calL_{\textbf{a}'}(j)-\calL_{\textbf{b}'}(j)}{n^j} =  \frac{\calL_{\textbf{a}}(j)-\calL_{\textbf{b}}(j)}{n^j}+\frac{N_2-N_1}{n^j}+1, \quad \textrm{for $j$ large enough}.
$$
We claim that for this case,
$$
\frac{\calL_{\textbf{a}}(j)-\calL_{\textbf{b}}(j)}{n^j}<0,  \quad \textrm{for $j$ large enough}.
$$
Otherwise, we can find some $j'>0$, such that 
$$
\frac{\calL_{\textbf{a}}(j')-\calL_{\textbf{b}}(j')}{n^{j'}} \ge C_1>0 \quad \textrm{and} \quad  \left| \frac{N_2-N_1}{n^{j'}} \right|<\frac{C_1}{1000}. 
$$
Then we have
$$
 \frac{\calL_{\textbf{a}'}(j')-\calL_{\textbf{b}'}(j')}{n^{j'}}<1< \frac{\calL_{\textbf{a}}(j)-\calL_{\textbf{b}}(j)}{n^j}+\frac{N_2-N_1}{n^j}+1,
$$
which is a contradiction. Thus, for the second case, we have
$$
\liminf_{j \to +\infty} \left| \frac{\calL_{\textbf{a}'}(j)-\calL_{\textbf{b}'}(j)}{n^j} \right|=1-C_2 \quad \textrm{and} \quad \limsup_{j \to +\infty} \left| \frac{\calL_{\textbf{a}'}(j)-\calL_{\textbf{b}'}(j)}{n^j} \right|=1-C_1.
$$

\medskip

\textit{Case C: $e(j)-d(j)=-1$.} Again, by \eqref{eq335}, we have
$$
 \frac{\calL_{\textbf{a}'}(j)-\calL_{\textbf{b}'}(j)}{n^j} =  \frac{\calL_{\textbf{a}}(j)-\calL_{\textbf{b}}(j)}{n^j}+\frac{N_2-N_1}{n^j}-1, \quad \textrm{for $j$ large enough}.
$$
Similarly, we have
$$
\frac{\calL_{\textbf{a}}(j)-\calL_{\textbf{b}}(j)}{n^j}>0,  \quad \textrm{for $j$ large enough},
$$
and hence
$$
\left|  \frac{\calL_{\textbf{a}'}(j)-\calL_{\textbf{b}'}(j)}{n^j} \right|=1- \frac{\calL_{\textbf{a}}(j)-\calL_{\textbf{b}}(j)}{n^j}-\frac{N_2-N_1}{n^j},  \quad \textrm{for $j$ large enough}.
$$
Thus, for the third case, we still have
$$
\liminf_{j \to +\infty} \left| \frac{\calL_{\textbf{a}'}(j)-\calL_{\textbf{b}'}(j)}{n^j} \right|=1-C_2 \quad \textrm{and} \quad \limsup_{j \to +\infty} \left| \frac{\calL_{\textbf{a}'}(j)-\calL_{\textbf{b}'}(j)}{n^j} \right|=1-C_1.
$$

Finally, we turn back to the general case when $N_1, N_2 \in \Z$. However, this follows in the same manner, as when comparing two representations, we can always rename them so that we have $\delta_1' = \delta+N_1$, with $N_1\geq 0$ (and similarly for $\delta_2$).  Then with the renamed constants $C_1'$ and $C_2'$, we can run through the same argument and get the same conclusion.

\end{proof}

\end{document}